\documentclass[11pt,oneside,english]{amsart}
\usepackage[T1]{fontenc}
\usepackage[latin9]{inputenc}
\pagestyle{plain}
\setcounter{tocdepth}{1}
\usepackage{textcomp}
\usepackage{amsthm}
\usepackage{amssymb}
\usepackage{esint}

\makeatletter
\numberwithin{equation}{section}
\numberwithin{figure}{section}
\theoremstyle{plain}
\newtheorem{thm}{\protect\theoremname}[section]
  \theoremstyle{plain}
  \newtheorem*{conjecture*}{\protect\conjecturename}
  \theoremstyle{plain}
  \newtheorem{cor}[thm]{\protect\corollaryname}
  \theoremstyle{plain}
  \newtheorem{lem}[thm]{\protect\lemmaname}
  \theoremstyle{definition}
  \newtheorem{example}[thm]{\protect\examplename}
  \theoremstyle{remark}
  \newtheorem{rem}[thm]{\protect\remarkname}


\def\makebbb#1{
    \expandafter\gdef\csname#1\endcsname{
        \ensuremath{\Bbb{#1}}}
}\makebbb{R}\makebbb{N}\makebbb{Z}\makebbb{C}\makebbb{H}\makebbb{E}\makebbb{H}\makebbb{P}\makebbb{B}\makebbb{Q}\makebbb{E}

\usepackage{babel}

\usepackage{babel}

\makeatother

\usepackage{babel}

\makeatother

\usepackage{babel}
  \providecommand{\conjecturename}{Conjecture}
  \providecommand{\corollaryname}{Corollary}
  \providecommand{\examplename}{Example}
  \providecommand{\lemmaname}{Lemma}
  \providecommand{\remarkname}{Remark}
\providecommand{\theoremname}{Theorem}

\begin{document}

\title{The volume of Kähler-Einstein Fano varieties and convex bodies }

\author{Robert J. Berman, Bo Berndtsson}

\email{robertb@chalmers.se, bob@chalmers.se}

\curraddr{Mathematical Sciences - Chalmers University of Technology and University
of Gothenburg - SE-412 96 Gothenburg, Sweden }
\begin{abstract}
We show that the complex projective space $\P^{n}$ has maximal degree
(volume) among all $n-$dimensional Kähler-Einstein Fano manifolds
admitting a holomorphic $\C^{*}-$action with a finite number of fixed
points. The toric version of this result, translated to the realm
of convex geometry, thus confirms Ehrhart's volume conjecture for
a large class of rational polytopes, including duals of lattice polytopes.
The case of spherical varieties/multiplicity free symplectic manifolds
is also discussed. The proof uses Moser-Trudinger type inequalities
for Stein domains and also leads to criticality results for mean field
type equations in $\C^{n}$ of independent interest. The paper supersedes
our previous preprint \cite{b-b1b} concerning the case of toric Fano
manifolds.

\tableofcontents{}
\end{abstract}
\maketitle

\section{Introduction}

\subsection{Complex geometry}

Let $X$ be an $n-$dimensional complex manifold $X$ which is \emph{Fano,}
i.e. its first Chern class $c_{1}(X)$ is ample (positive) and in
particular $X$ is a projective algebraic variety. For some time it
was expected that the top-intersection number $c_{1}(X)^{n},$ also
called the (anti-canonical)\emph{ degree} of $X,$ is maximal for
the $n-$dimensional complex projective space, i.e. 
\begin{equation}
c_{1}(X)^{n}\leq(n+1)^{n},\label{eq:conj ineq}
\end{equation}
There are now counterexamples to this bound. For example, as shown
by Debarre (see page 139 in \cite{de}), even in the case when $X$
is \emph{toric} (i.e. $X$ admits an effective holomorphic action
of the complex torus $(\C^{*})^{n}$ with an open dense orbit) there
is no universal polynomial upper bound on the $n-$th root of the
degree of $X.$ A more recent conjecture says that the bound above
holds for any Fano manifold whose Picard number is one \cite{kol}.
Given the special role of Kähler-Einstein metrics in complex geometry
- in particular in connection to Chern number inequalities \cite{yau}
- it is also natural to ask if the bound above holds for any Fano
manifold admitting a Kähler-Einstein metric $\omega$? Then $c_{1}(X)^{n}/n!$
is the volume of $X$ in the metric $\omega.$ One step in this direction
was taken by Gauntlett-Martelli-Sparks-Yau \cite{g-m-s-y}, who showed,
using Bishop's volume inequality, that if $X$ is a Fano Kähler-Einstein
manifold then the inequality \ref{eq:conj ineq} holds when the right
hand side is multiplied by $(n+1)/I(X),$ where $I(X)$ is the \emph{Fano
index }of $X,$ i.e. the largest positive integer $I$ such that $c_{1}(X)/I$
is an integral class in the Picard group of $X.$ As is well-known
$I(X)\leq n+1$ with equality precisely for $X=\P^{n}$ and hence
the latter result leaves the question of the maximization property
of $\P^{n}$ open. The main result in this paper shows that the inequality
\ref{eq:conj ineq} indeed holds for Kähler-Einstein Fano manifolds
in the presence of a certain amount of symmetry:
\begin{thm}
\label{thm:max vol vector field intro}Let $X$ be a Fano manifold
which admits a Kähler-Einstein metric and a holomorphic $\C^{*}-$action
with a finite number of fixed points. Then the first Chern class $c_{1}(X)$
satisfies the following upper bound 
\[
c_{1}(X)^{n}\leq(n+1)^{n}
\]
In other words, the complex projective space $\P^{n}$ has maximal
degree among all Fano manifolds $X$ as above.
\end{thm}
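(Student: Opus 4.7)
The plan is to exploit the $\C^{*}$-action to transport the K\"ahler--Einstein condition from $X$ to a complex Monge--Amp\`ere equation on the affine chart $\C^{n}$, and then to derive the volume bound from a sharp Moser--Trudinger type inequality on $\C^{n}$ combined with the boundedness from below of the Ding functional. Since the fixed-point set of the $\C^{*}$-action is finite, the Bialynicki--Birula decomposition produces a unique \emph{attracting} fixed point $p_{+}$ whose basin $U \subset X$ is Zariski open and biholomorphic to $\C^{n}$, with the linearized $\C^{*}$-action given by strictly positive weights $a_{1},\dots,a_{n}$. Fixing a $\C^{*}$-equivariant trivialization of $-K_{X}$ over $U$, the K\"ahler--Einstein metric $\omega = dd^{c}\phi$ is encoded on $U \cong \C^{n}$ by a plurisubharmonic exhaustion $\phi$ solving $(dd^{c}\phi)^{n} = e^{-\phi}\, d\lambda$, with $d\lambda$ the Euclidean volume form. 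Since $X \setminus U$ has measure zero, the total volume is
\[
V := c_{1}(X)^{n} = \int_{\C^{n}} e^{-\phi}\, d\lambda.
\]

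Next, I would reformulate the K\"ahler--Einstein condition variationally. Using Bando--Mabuchi uniqueness I would choose an $S^{1}$-invariant representative of $\phi$, where $S^{1} \subset \C^{*}$ is the maximal compact. Then $\phi$ is a critical point of the Ding-type functional
\[
\mathcal{D}(u) = -\frac{1}{V}\,\mathcal{E}(u) - \log \int_{\C^{n}} e^{-u}\, d\lambda,
\]
where $\mathcal{E}$ denotes the Monge--Amp\`ere energy; the Euler--Lagrange equation is exactly the mean-field equation in $\C^{n}$ advertised in the abstract. Moreover, since $X$ admits a K\"ahler--Einstein metric, $\mathcal{D}$ is bounded from below.

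The central analytic input is then a sharp Moser--Trudinger inequality on the Stein manifold $\C^{n}$, of the form
\[
\log \int_{\C^{n}} e^{-u}\, d\lambda \leq \frac{1}{(n+1)^{n}}\, \mathcal{E}(u) + C,
\]
valid for $S^{1}$-invariant plurisubharmonic $u$ of the appropriate growth class, with sharp constant $(n+1)^{n}$ realized by pullbacks of the Fubini--Study potential on $\P^{n}$. The $\C^{*}$-scaling generates a natural one-parameter family $\phi_{t}$ of trial potentials; computing the asymptotic slope of $\mathcal{D}$ along this family yields a quantity proportional to $(n+1)^{n} - V$, and lower-boundedness of $\mathcal{D}$ forces this slope to be non-negative, so that $V \leq (n+1)^{n}$.

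The principal obstacle is establishing the sharp Moser--Trudinger inequality on $\C^{n}$ for the relevant class of potentials, together with the identification of its extremal constant. In the fully toric case the inequality reduces, via Legendre transform, to a Brunn--Minkowski/Pr\'ekopa-type inequality on the associated moment polytope, which is what would confirm Ehrhart's volume conjecture for duals of lattice polytopes. For Fano manifolds with only rank-one symmetry, however, one must normalize correctly using the $\C^{*}$-weights $a_{1},\dots,a_{n}$ at $p_{+}$, demonstrate that the sharp constant is universal (in particular independent of the specific weights), and control possible concentration of solutions of the mean-field equation at infinity in $\C^{n}$. The equality case of the inequality should simultaneously characterize $\P^{n}$ as the unique extremal manifold.
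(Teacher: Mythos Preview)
Your overall architecture is close to the paper's: pass to the attracting Bia\l ynicki--Birula cell $U\cong\C^{n}$, rewrite the K\"ahler--Einstein condition as a Monge--Amp\`ere/mean-field equation there, and extract the bound from a Moser--Trudinger type inequality together with a destabilizing one-parameter family. But the argument as written has a genuine logical gap, and you are missing the key analytic ingredient that closes it.

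The gap is this. Your step ``since $X$ admits a K\"ahler--Einstein metric, $\mathcal{D}$ is bounded from below'' is the Ding--Tian inequality, and it holds only on the space of \emph{global} positively curved metrics on $-K_{X}$. The test family you need, however, is one that concentrates at the attractive fixed point and in the limit acquires a logarithmic pole there; such functions are \emph{not} restrictions of global metrics on $-K_{X}$ (there are well-known obstructions to prescribing log poles globally), so the global Ding lower bound does not apply to them. If instead you literally use the $\C^{*}$-orbit of the K\"ahler--Einstein potential as your scaling family $\phi_{t}$, then after the obvious additive renormalization every $\phi_{t}$ is again a K\"ahler--Einstein potential, and $\mathcal{D}(\phi_{t})$ is \emph{constant}: the slope you want to read off is identically zero, not proportional to $(n+1)^{n}-V$. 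So neither interpretation of ``$\C^{*}$-scaling'' does the job.

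The paper fixes this by localizing to \emph{bounded} sublevel sets $\Omega=\{\phi<R\}$ of the K\"ahler--Einstein potential and proving a Moser--Trudinger inequality on $\Omega$ for \emph{all} $S^{1}$-invariant psh functions vanishing on $\partial\Omega$, not just for restrictions of global metrics. The engine for this is a Pr\'ekopa-type subharmonicity theorem for $t\mapsto -\log\int_{\Omega}e^{-\phi_{t}}$ along psh families, valid precisely when every $S^{1}$-invariant holomorphic function on $\Omega$ is constant; this last condition holds because the fixed point is attractive (all weights positive). Combined with the affineness of the energy along geodesics, one gets that the K\"ahler--Einstein solution maximizes the Moser--Trudinger functional with parameter $\gamma=V_{\Omega}^{1/n}$ on $\mathcal{H}_{0}(\Omega)^{S^{1}}$. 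Then one plugs in the regularized pluricomplex Green function $g_{t}=\log(e^{-2t}+e^{g})$ with pole at the fixed point and computes that the functional tends to $+\infty$ whenever $\gamma>n+1$, giving the contradiction. Note that the inequality is not a universal one with fixed constant $(n+1)^{n}$ as you wrote; rather, it carries the parameter $V_{\Omega}^{1/n}$, and the Green-function calculation shows it cannot hold for parameters exceeding $n+1$.

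In short: replace ``global Ding lower bound plus $\C^{*}$-scaling'' by ``Pr\'ekopa-type convexity on a bounded $S^{1}$-invariant Stein domain plus the regularized pluricomplex Green function at the attractive fixed point.'' That is where the $S^{1}$-symmetry is actually used, and it is the step you have not supplied.
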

The starting point of the proof of the theorem is the fact that, under
the assumptions in the theorem, there is a holomorphic $S^{1}-$action
on $X,$ preserving the Kähler-Einstein metric and with an attractive
fixed point $p.$ The key point of the proof, which builds on our
previous work, is then to study $S^{1}-$invariant Moser-Trudinger
type inequalities in a sufficently large $S^{1}-$invariant Stein
domain $\Omega$ in $X$ containing the fixed point $p$ (compare
section \ref{sub:Critical-mean-field} below). 

The simplest class of varieties in which the assumption in the previous
theorem are satisfied is the class of (generalized)\emph{ flag varieties,}
i.e. rational\emph{ $G-$homogeneous spaces}. As is well-known these
are all Fano manifolds and by homogeneity they also carry Kähler-Einstein
metrics, invariant under the maximal compact subgroup $K$ of $G.$
In this case the bound in the previous theorem was first obtained
by Snow \cite{sn}, using representation theory and quite elaborate
calculcations. 

More generally, the previous theorem applies to any Fano manifold
$X$ on which a reductive connected complex algebraic group $G$ (i.e.
$G$ is the complexification of compact Lie group $K)$ acts algebraically
with finitely many orbits (see Remark \ref{rem:finite}). A particularly
rich class of such $G-$varieties is given by \emph{spherical varieties
}(i.e. a Borel subgroup $B$ of $G$ has an open dense orbit in $X)$
\cite{l-v,bri-1,br-0}. In case a spherical variety is Fano it may
or may not admit a Kähler-Einstein metrics and the inequality in the
previous theorem can hence be viewed as a new obstruction for the
existence of a Kähler-Einstein metric on spherical Fano varities.
According to a formula of Brion \cite{bri-1} the top-intersection
number $c_{1}(L)$ of a polarized spherical variety $(X,L)$ can be
expressed as an explicit integral over a certain polytope $P$ naturally
associated to $X.$ We will recall the symplecto-geometric description
of Brion's formula in section \ref{sec:Symplectic-geometry,-multiplicity}.
Let us also point out that an interesting classical subclass of spherical
varieties is offered by \emph{Schubert varieties }and it is well-known
that any smooth Schubert variety in a Grassmannian is Fano \cite{w-y}.
Another rich subclass is given by $G-$equivariant compactifications
of\emph{ symmetric spaces }and in particular the so called \emph{wonderful
compactifications,} which are often Fano \cite{ru}. In fact, smooth
wonderful compactifications are always \emph{weakly Fano, }i.e.$-K_{X}$
is nef and big \cite{ru} and, in fact, Theorem \ref{thm:max vol vector field intro}
is still valid when $X$ is merely weakly Fano if one uses the notion
of (singular) Kähler-Einstein metrics introduced in \cite{bbegz}
(see Remark \ref{rem:weakly Fano}). 

Of course, in Theorem \ref{thm:max vol vector field intro} it is
enough to assume that $X$ can be \emph{deformed} to a complex manifold
satisfying the assumptions in the theorem. Moreover, in the absense
of a Kähler-Einstein metric we show that the inequality in the Theorem
\ref{thm:max vol vector field intro} still holds when the right hand
side is multiplied by $1/R(X)^{n},$ where $R(X)$ is the greatest
lower bound on the Ricci curvature of $X$ (see Theorem \ref{thm:R(X)}).

\subsubsection*{Toric Fano varieties}

In the special spherical case when $X$ is a \emph{toric} manifold,
i.e. the groups $G$ and $B$ both coincide with the complex torus
$\C^{*n},$ the inequality in Theorem \ref{thm:max vol vector field intro}
was conjectured to hold by Nill-Paffenholz \cite{n-p}. We expect
that the previous theorem can be extended to \emph{singular} spherical
Fano varieties admitting (singular) Kähler-Einstein metrics, but we
will only show this for toric varieties. First recall that, by definition,
$X$ is a Fano variety if $K_{X}$ is an ample $\Q-$line bundle and
$\omega$ is a singular Kähler-Einstein metric on $X$ if its is a
bona fide Kähler-Einstein metric on the regular locus of $X$ such
that $\omega$ extends to a global current in $c_{1}(X)\in H^{2}(X,\Q)$
with continuous local potentials (see \cite{bbegz}). 
\begin{thm}
\label{thm:max vol intro toric sing}Let $X$ be an $n-$dimensional
toric Fano variety which admits a (singular) Kähler-Einstein metric.
Then its first Chern class $c_{1}(X)$ satisfies the following upper
bound which is attained when $X$ is the complex projective space
$\P^{n}:$ 
\[
c_{1}(X)^{n}\leq(n+1)^{n}
\]

\end{thm}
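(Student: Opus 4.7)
The approach is to reduce Theorem \ref{thm:max vol intro toric sing} to the smooth case via a toric-polytope approximation, invoking the Ricci-lower-bound refinement $c_1(X)^n\leq (n+1)^n/R(X)^n$ of Theorem \ref{thm:max vol vector field intro} mentioned at the end of the previous subsection.

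First I would encode the (singular) toric Fano $X$ by its moment polytope $P\subset\R^n$ for $-K_X$: a rational polytope containing $0$ in its interior with $c_1(X)^n=n!\,\mathrm{vol}(P)$. By the toric version of Wang--Zhu, extended to the singular setting via \cite{bbegz}, the existence of a singular K\"ahler--Einstein metric on $X$ is equivalent to the barycenter of $P$ lying at the origin, so the K\"ahler--Einstein hypothesis becomes a purely convex-geometric constraint on $P$.

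Next I would construct a sequence of smooth toric Fano manifolds $X_k$ whose moment polytopes $P_k$ converge to $P$ in the Hausdorff metric (after suitable rescaling), with $\mathrm{vol}(P_k)\to\mathrm{vol}(P)$ and barycenters tending to $0$. Such an approximation is produced by first taking a toric resolution of singularities of $X$ and then perturbing the resulting fan/polytope inside the locus of smooth Fano polytopes. Since each $X_k$ is a smooth toric Fano manifold it carries a $\C^*$-action with isolated fixed points, so the Ricci-lower-bound refinement of Theorem \ref{thm:max vol vector field intro} yields
\[
c_1(X_k)^n R(X_k)^n \leq (n+1)^n.
\]
For smooth toric Fano manifolds $R(X_k)$ admits an explicit formula (due to Li) in terms of $P_k$ and its barycenter; this formula is continuous in the polytope and equals $1$ exactly when the barycenter is at the origin. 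Hence $P_k\to P$ together with barycenters $\to 0$ forces $R(X_k)\to 1$, and passing to the limit in the displayed inequality gives $c_1(X)^n\leq (n+1)^n$.

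The main obstacle is the construction of the approximating sequence $X_k$ with the required simultaneous control on barycenters: one has to verify that smooth Fano polytopes are sufficiently dense in the space of rational Fano polytopes that any K\"ahler--Einstein target polytope $P$ can be approached by smooth Fano polytopes $P_k$ whose barycenters tend to the origin. This is a purely combinatorial lattice-polytope statement (involving stellar subdivisions and small rational perturbations of fans), independent of the complex-geometric machinery underlying Theorem \ref{thm:max vol vector field intro}; once it is in place, the remainder of the argument is a routine limiting procedure.
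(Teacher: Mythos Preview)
Your approximation strategy has a fatal gap: in each fixed dimension $n$ there are only \emph{finitely many} smooth toric Fano $n$-folds up to isomorphism (this is classical; see e.g.\ the classification \cite{oe} cited in the paper). Consequently the set of anticanonical polytopes of smooth toric Fano $n$-folds is a finite, hence discrete, subset of the space of convex bodies, and it is impossible to produce a sequence $P_k$ of such polytopes converging in the Hausdorff metric to an arbitrary singular Fano polytope $P$ (let alone with barycenters tending to $0$). Allowing ``suitable rescaling'' does not help: if $\lambda_k P_k'\to P$ with each $P_k'$ drawn from a finite list, then after passing to a subsequence $P_k'$ is eventually constant, forcing $P$ itself to be a dilate of a smooth Fano polytope --- a very special situation. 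Your suggestion of ``toric resolution followed by perturbation inside the locus of smooth Fano polytopes'' is also problematic for a more basic reason: a toric resolution $\tilde X\to X$ of a singular Fano variety is essentially never Fano (the exceptional divisors obstruct ampleness of $-K_{\tilde X}$), so there is nothing to perturb \emph{within} the Fano locus.

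The paper avoids this obstacle entirely by working directly in real convex analysis, with no approximation by smooth varieties. The toric K\"ahler--Einstein equation on $X$ is rewritten as the real Monge--Amp\`ere equation $MA_{\R}(\psi)=e^{-\psi}d\nu_n$ on $\R^n$ (equation \ref{eq:real k-e equ with lesb}), and the argument of Section~\ref{sub:The-volume-of} is transplanted to unbounded ``monotone'' convex domains $\Omega=\{\psi<R\}\subset\R^n$: one establishes a real Moser--Trudinger inequality on $\mathcal H(\Omega,\psi)$ using Prekopa's theorem in place of Theorem~\ref{thm:prekopa type}, and then violates it for $\gamma>n+1$ using the convex Green function \eqref{eq:def of convex green f}. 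This route treats smooth and singular toric Fano varieties uniformly (indeed it works for an arbitrary convex body $P$, yielding Theorem~\ref{thm:max vol for convex body intro}), so the singularities of $X$ never enter. Alternatively, as noted in Remark~\ref{rem:grun}, once the problem is reduced to Theorem~\ref{thm:max vol for convex body intro} one can finish with Grunbaum's inequality.
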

The universal bound in the previous theorem should be contrasted with
the well-known fact that the volume of a general Fano variety $X$
of a fixed dimension $n\geq2$ can be arbitrarily large unless conditions
on the singularities of $X$ are imposed (see \cite{h-m} and references
therein and example 4 in \cite{de2:} for a simple toric example). 

According to the fundamental Yau-Tian-Donaldson conjecture in Kähler
geometry the existence of a Kähler-Einstien metric on a Fano manifold
$X$ is equivalent to $X$ being\emph{ K-stable }(see the recent survey
\cite{p-s}).\emph{ }This notion of stability is of an algebro-geometric
nature. The case of toric Fano manifolds was settled by Wang-Zhou
\cite{w-z}, who more precisely showed that a toric Fano manifold
$X$ admits a Kähler-Einstein metric precisely when $0$ is the barycenter
of the canonical lattice polytope $P_{X}$ associated to $X$ (see
below). In the paper \cite{b-b-2} we extend the result of Wang-Zhou
to the setting of general (possibly singular) Fano varieties:
\begin{thm}
\label{thm:-existence of ke intro}\cite{b-b-2} Let $X$ be an $n-$dimensional
toric Fano variety. Then the following is equivalent:
\begin{itemize}
\item $X$ admits a (singular) Kähler-Einstein metric
\item The barycenter is the unique interior lattice point of the polytope
$P_{X}$ associated to $X.$ 
\end{itemize}
\end{thm}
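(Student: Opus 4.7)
The plan is to reduce the existence of a singular K\"ahler-Einstein metric on a toric Fano variety $X$ to a convex-analytic problem on the associated lattice polytope $P_{X}\subset\R^{n}$, following the variational strategy of Wang-Zhou \cite{w-z} but formulated in the framework of \cite{bbegz} so it covers the singular case. First I would identify $S^{1}$-invariant positive metrics on $-K_{X}$ with torus-invariant plurisubharmonic weights on $\C^{*n}\subset X$, and, via the logarithm map and Legendre transform, with convex functions $\varphi$ on $\R^{n}$ whose Legendre transform $\varphi^{*}$ has $P_{X}$ as its essential domain. In these coordinates, the Ding functional $D(\varphi)$ becomes the difference of two convex-geometric quantities: a normalized ``energy'' term expressible as an integral over $\R^{n}$ involving $e^{-\varphi}$, and a linear term $\int_{P_{X}}\varphi^{*}\,dx$. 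Critical points (in an appropriate finite-energy sense) correspond to singular K\"ahler-Einstein potentials on $X_{\mathrm{reg}}$, and the regularity theory of \cite{bbegz} plus toric symmetry ensures the resulting current sits in $c_{1}(X)$ with continuous local potentials.

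For the forward implication, suppose $X$ admits a (singular) K\"ahler-Einstein metric $\omega$. By toric symmetry one may average over the compact torus and assume $\omega$ is invariant, so its potential corresponds to a convex minimizer $\varphi$ of $D$. Differentiating $D$ along translations $\varphi\mapsto\varphi+\langle a,x\rangle$ (which move the metric by the $\C^{*n}$-action) shows that the gradient in $a\in\R^{n}$ at the minimizer vanishes; a short Legendre-duality computation identifies this gradient with the barycenter of $P_{X}$. Since $P_{X}$ is a Fano (reflexive up to scaling) polytope containing $0$ as its unique interior lattice point, this gives the barycenter condition.

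For the converse I would show that the barycenter condition implies coercivity of $D$ and then extract a minimizer. The key inequality, established for the smooth toric case in \cite{w-z} and adaptable verbatim once one works with convex functions on $\R^{n}$, bounds $D(\varphi)$ below by a normalized sup-norm modulo translations, provided $0$ is the barycenter of $P_{X}$. This is precisely a Moser--Trudinger type inequality in the toric setting and fits the framework described in section \ref{sub:Critical-mean-field}. Having coercivity, a standard lower semicontinuity and compactness argument in the space of finite-energy torus-invariant weights (developed in \cite{bbegz}) produces a minimizer $\varphi_{\infty}$ which is automatically a weak singular K\"ahler-Einstein potential on $X$.

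The main obstacle is handling the singularities of $X$: one must verify that the variational machinery of \cite{bbegz}, originally developed in the analytic setting of varieties with klt singularities, is genuinely available for \emph{all} toric Fano varieties (which are automatically klt), and that the convex-analytic passage to $P_{X}$ remains valid without any smoothness assumption on the toric boundary divisor. A related delicate point is ruling out ``escape to infinity'' of the normalized potentials when the barycenter condition degenerates; this amounts to the sharpness of the toric Moser--Trudinger inequality, where the barycenter appears exactly as the threshold for coercivity. Once these points are secured the argument goes through in parallel with the smooth case.
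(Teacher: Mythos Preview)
The paper does not prove this theorem. It is stated with an explicit citation to the companion paper \cite{b-b-2}, and is used here only as an input (together with the real Monge--Amp\`ere existence result, Theorem \ref{thm:-existence of real k-e}, likewise imported from \cite{b-b-2}) to deduce the Ehrhart-type volume bound of Corollary \ref{cor:ehrhart intro}. Section \ref{sec:The-volume-of-singular-toric} does set up the convex-analytic dictionary you invoke---identifying torus-invariant metrics on $-K_X$ with convex functions on $\R^n$ via the Log map, and rewriting the K\"ahler--Einstein equation as the real Monge--Amp\`ere equation \eqref{eq:real k-e equ with lesb}---but the existence/obstruction argument itself is deferred entirely to \cite{b-b-2}. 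So there is no proof in this paper to compare your proposal against.

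That said, your outline is a plausible sketch of the argument one expects in \cite{b-b-2}, and it is consistent with the hints given here: the reduction to convex analysis on $\R^n$, the translation-invariance computation yielding the barycenter as the Futaki-type obstruction, and a coercivity/variational argument for existence in the spirit of Wang--Zhou adapted via \cite{bbegz} to the singular setting. The points you flag as delicate (that toric Fano varieties are automatically klt so that \cite{bbegz} applies, and that the barycenter condition is exactly the threshold for coercivity of the Ding-type functional) are indeed the substantive ones, and nothing in the present paper fills them in.
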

More generally, the result is shown to hold in the setting of toric
\emph{log Fano varieties }$(X,\Delta)$ familiar from the Minimal
Model Program (MMP), i.e. $X$ is a toric variety and $\Delta$ is
a torus invariant $\Q-$divisor on $X$ with coefficents $<1$ such
that the anti-canonical divisor $-(K_{X}+\Delta)$ of $(X,\Delta)$
defines an ample $\Q-$line bundle on $X.$ In this general setting
Theorem \ref{thm:max vol intro toric sing} holds for the log first
Chern class $c_{1}(-(K_{X}+\Delta))$ of $(X,\Delta)$ if the coefficients
of are positive $\Delta$ and Theorem \ref{thm:-existence of ke intro}
holds for any toric log Fano variety $(X,\Delta).$ 

The barycenter condition for the existence of a Kähler-Einstein metric
on a toric variety is the link to Ehrhart's volume conjecture in convex
geometry, to which we next turn.

\subsection{Convex geometry}

There is a well-known dictionary relating toric polarized varieties
$(X,L)$ and rational polytopes $P$ \cite{de2:,do,c-l-s}. In particular,
the top intersection number $c_{1}(L)^{n}$ coincides with $n!$ times
the volume of the corresponding polytope $P.$ As pointed out in \cite{n-p}
one of the motivations for the bound \ref{eq:conj ineq} on $c_{1}(X)^{n}$
in the\emph{ toric }setting is another more general conjecture of
Ehrhart in the realm of convex geometry, which can be seen as a variant
of Minkowski\textquoteright{}s first theorem for non-symmetric convex
bodies:
\begin{conjecture*}
(Ehrhart). Let $P$ be an n-dimensional convex body which contains
precisely one interior lattice point. If the point coincides with
the barycenter of $P$ then 
\[
\mbox{Vol}(P)\leq\frac{(n+1)^{n}}{n!}
\]

\end{conjecture*}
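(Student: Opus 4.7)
The strategy is to deduce Ehrhart's inequality from the volume bound of Theorem~\ref{thm:max vol intro toric sing} together with the K\"ahler-Einstein existence criterion of Theorem~\ref{thm:-existence of ke intro}, via the standard toric dictionary between convex and algebraic geometry. Applied directly, the method yields the bound for a class of rational polytopes; extending to arbitrary convex bodies is the genuine obstacle.

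First I would set up the dictionary. For a rational polytope $P\subset\R^{n}$ containing $0$ in its interior, the inner normal fan of $P$ defines a complete toric variety $X$, and there is a torus-invariant $\Q$-divisor $\Delta$ on $X$, determined by the lattice distances from $0$ to the facets of $P$, such that $-(K_X+\Delta)$ is an ample $\Q$-Cartier divisor whose moment polytope is $P$; in particular
\begin{equation*}
n!\,\mathrm{Vol}(P) \;=\; c_{1}\bigl(-(K_X+\Delta)\bigr)^{n}.
\end{equation*}
The case $\Delta=0$, i.e.\ $X$ a Gorenstein toric Fano, recovers precisely the situation in which $P$ is the polar dual of a reflexive lattice polytope; more generally, when the facet distances lie in $(0,1]$ one obtains an effective $\Delta$ with coefficients in $[0,1)$, producing a toric log Fano pair in the sense discussed before Theorem~\ref{thm:-existence of ke intro}.

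Granted that $P$ falls within this dictionary, I would then argue as follows. The hypothesis that $0$ is the unique interior lattice point and the barycenter of $P$ is, by the log version of Theorem~\ref{thm:-existence of ke intro}, exactly the condition that $(X,\Delta)$ admits a singular K\"ahler-Einstein metric. The log form of Theorem~\ref{thm:max vol intro toric sing} (valid when the coefficients of $\Delta$ are positive) then gives $c_{1}(-(K_X+\Delta))^{n}\le (n+1)^{n}$, which by the identity above is precisely $\mathrm{Vol}(P)\le (n+1)^{n}/n!$. Equality forces $(X,\Delta)$ to be equivariantly isomorphic to $(\P^{n},0)$, i.e.\ $P$ is $\mathrm{GL}(n,\Z)$-equivalent to the standard simplex $\{x\in\R^{n}:x_{i}\ge -1,\;\sum_{i}x_{i}\le 1\}$.

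The main obstacle is the passage from rational polytopes to an arbitrary convex body $P$. The natural plan is to approximate $P$ in Hausdorff distance by rational polytopes $P_{\varepsilon}$ and pass to the limit using continuity of volume. The difficulty is that Ehrhart's hypotheses behave asymmetrically under perturbation: uniqueness of the interior lattice point is an open condition and is therefore stable, while the requirement that $0$ be the barycenter is a codimension-$n$ equality. One would need to translate and dilate each $P_{\varepsilon}$ slightly so as to recenter its barycenter at $0$ without creating new interior lattice points, then check that the adjusted $P_{\varepsilon}$ still lies in the range of the toric dictionary. This adjustment is purely convex-geometric and lies outside the K\"ahler-Einstein machinery, which is why the approach confirms Ehrhart for a large class of rational polytopes rather than the full conjecture.
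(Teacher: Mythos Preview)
The statement you are addressing is a \emph{conjecture}, and the paper does not prove it in full; it establishes only the special case recorded as Corollary~\ref{cor:ehrhart intro} (rational polytopes with all $a_F\le 1$). Your proposal is honest about this and, for that special case, your route---translate the polytope into a toric log Fano pair $(X,\Delta)$, invoke the barycenter criterion of Theorem~\ref{thm:-existence of ke intro} to get a K\"ahler--Einstein metric, then apply the log form of Theorem~\ref{thm:max vol intro toric sing}---is exactly the paper's argument.

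Where your write-up is slightly off is in diagnosing the obstacle to the general case. You present the difficulty as one of perturbation: keeping the barycenter at $0$ while approximating by rational polytopes. But the genuine obstruction appears earlier and is structural, not approximative. The log volume bound requires $\Delta$ to be effective, i.e.\ $a_F\le 1$ for every facet, and this is \emph{not} implied by Ehrhart's hypotheses even for rational polytopes: there exist rational polytopes whose unique interior lattice point is the barycenter yet have some $a_F>1$. For such $P$ no small perturbation will bring you into the range where the theorems apply, so Hausdorff approximation is beside the point. The paper makes no attempt at an approximation argument for precisely this reason; it simply records Corollary~\ref{cor:ehrhart intro} as the reach of the method and leaves the full conjecture open. (The paper does prove a general convex-body statement, Theorem~\ref{thm:max vol for convex body intro}, but its hypothesis---$P$ lies in the positive orthant with barycenter $(1,\dots,1)$---is again a translation of the $a_F\le 1$ constraint, not of the raw Ehrhart hypothesis.)
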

The case when $n=2$ was settled by Ehrhart \cite{e}, as well as
the special case of simplices in arbitrary dimensions \cite{e2}.
As explained in the survey \cite{g-w} the best upper bound in Ehrhart's
conjecture, to this date, is $\mbox{Vol}(P)\leq(n+1)^{n}/n^{n}.$
As we will next explain Theorem \ref{thm:max vol intro toric sing}
(or rather its more general Log version) confirms Ehrhart's conjecture
for a large class of rational polytopes. First recall that any rational
polytope in $\R^{n},$ containing zero in its interior, may be written
uniquely as 
\begin{equation}
P=\{p\in\R^{n}:\,\,\left\langle l_{F},p\right\rangle \geq-a_{F}\},\label{eq:real polytope intro}
\end{equation}
where the index $F$ ranges over the facets of $P,$ $a_{F}$ is a
positive rational number and the vector $l_{F}$ is a primitive lattice
vector, i.e. it has integer coefficients with no common factors (geometrically,
$l_{F}$ is the inward normal vector of the facet $F$ normalized
with respect to the integer structure). As is well-known \cite{de2:}
toric Fano varieties with $-K_{X}$ an ample $\Q-$line bundle correspond
to rational polytopes $P$ as above with $a_{F}=1.$ More generally,
toric log Fano varieties $(X,\Delta)$ with $\Delta$ an effective
$\Q-$divisor on $X$ correspond to polytopes $P$ with $a_{F}\leq1$
\cite{c-l-s,b-b-2}. Combining the Log version of Theorem \ref{thm:max vol intro toric sing}
above with the existence result for Kähler-Einstein metrics on (possibly
singular) toric varieties hence gives the following
\begin{cor}
\label{cor:ehrhart intro}The bound in the Ehrhart conjecture holds
for all rational polytopes satisfing $a_{F}\leq1$ in the representation
\ref{eq:real polytope intro} and such that $0$ is the barycenter
of $P.$
\end{cor}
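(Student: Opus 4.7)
The plan is to combine the convex-geometric-algebro-geometric dictionary with the two main results on toric log Fano varieties cited in the excerpt: the existence criterion for singular K\"ahler-Einstein metrics (Theorem \ref{thm:-existence of ke intro}) and the volume bound (the log version of Theorem \ref{thm:max vol intro toric sing}).

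First, given a rational polytope $P \subset \R^n$ with $0$ in its interior and facet representation $\langle l_F, p\rangle \geq -a_F$ with $a_F \leq 1$, I would invoke the well-known toric dictionary to produce a toric log Fano pair $(X_P, \Delta_P)$, where the fan of $X_P$ is the normal fan of $P$ and the boundary divisor is
\[
\Delta_P = \sum_F (1-a_F)\, D_F,
\]
$D_F$ being the torus-invariant prime divisor corresponding to the facet $F$. Since $a_F \leq 1$, the divisor $\Delta_P$ is effective, and the anti-log-canonical class $-(K_{X_P}+\Delta_P)$ is the ample $\Q$-line bundle whose moment polytope is exactly $P$. In particular, the standard formula from toric geometry gives
\[
c_1\bigl(-(K_{X_P}+\Delta_P)\bigr)^n = n!\,\mathrm{Vol}(P).
\]

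Next, I would apply Theorem \ref{thm:-existence of ke intro} (its log version). The hypothesis that $0$ is the unique interior lattice point and that it coincides with the barycenter of $P$ translates precisely into the barycenter condition on the moment polytope associated to $(X_P, \Delta_P)$, which, by the log version of Wang-Zhou extended in \cite{b-b-2}, guarantees the existence of a singular K\"ahler-Einstein metric on the toric log Fano pair $(X_P, \Delta_P)$.

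Finally, the log version of Theorem \ref{thm:max vol intro toric sing}, applicable since $\Delta_P$ has positive coefficients $1-a_F \geq 0$, yields the inequality
\[
c_1\bigl(-(K_{X_P}+\Delta_P)\bigr)^n \leq (n+1)^n.
\]
Dividing by $n!$ and using the volume identity above gives $\mathrm{Vol}(P) \leq (n+1)^n/n!$, which is Ehrhart's bound. The main conceptual step is the translation from the polytope data to the log pair; once that is set up correctly, both ingredients apply directly, and there is no analytic obstacle beyond the results already invoked. The only minor subtlety is to check that the hypothesis ``$0$ is the unique interior lattice point'' in Ehrhart's conjecture is automatic here, since the polytope's defining inequalities $a_F \leq 1$ force any interior lattice point $m$ to satisfy $\langle l_F, m\rangle > -1$ for every facet, so $m = 0$ is the only candidate.
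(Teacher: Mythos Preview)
Your proposal is correct and matches the approach the paper sketches in the introduction: the corollary is stated precisely as the combination of the existence criterion for singular K\"ahler--Einstein metrics on toric log Fano pairs (the log version of Theorem~\ref{thm:-existence of ke intro}) with the log version of the volume bound (Theorem~\ref{thm:max vol intro toric sing}), applied to the pair $(X_P,\Delta_P)$ that you build from $P$. The paper also offers, in Section~\ref{sec:The-volume-of-singular-toric}, a slightly different and more self-contained reduction: instead of passing through the toric variety, it applies an affine map sending $P$ into the positive octant with barycenter $(1,\dots,1)$, checks that this can only increase the volume when $a_F\le 1$, and then invokes the purely convex Theorem~\ref{thm:max vol for convex body intro}; this buys independence from the algebro-geometric dictionary but is essentially the same argument unwound in real coordinates.
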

In any polytope such that \textbf{$a_{F}\leq1$} the origin is indeed
the unique lattice point (see the foot note on page 105 in \cite{de2:}).
The class of such rational polytopes $P$ is vast and appears naturally
both in algebraic geometry and in combinatorics. For example, the
dual (polar) $P=Q^{*}$ of any lattice polytope $Q$ containing $0$
in its interior is in this class (see \cite{f-k} for the combinatorics
of such polytopes). In particular, this shows that the bound in the
Ehrhart conjecture holds for any \emph{reflexive} convex lattice polytope
(i.e. a lattice polytope $P$ containing $0$ such that its dual $Q$
is also a lattice polytope). Such polytopes correspond to \emph{Gorenstein
}toric Fano varieties and were introduced and studied by Batyrev \cite{ba}
in connection to mirror symmetry of pairs of Calabi-Yau manifolds.
In dimension $n\leq8$ the Ehrhart conjecture up to $n\leq8$ for
reflexive \emph{Delzant} polytopes (i.e. those corresponding to \emph{smooth}
toric Fano varieties) has previously been confirmed by computer assistance
(as announced in \cite{n-p}), using the classification of such polytopes
for $n\leq8$ \cite{oe}).

As explained in section \ref{sec:The-volume-of-singular-toric} the
arguments in the proof of Theorem \ref{thm:max vol intro toric sing}
and its Corollary above can be carried out directly in terms of convex
analysis in $\R^{n}$ without any reference to the corresponding toric
variety. In fact, it is enough to assume that $P$ is a convex body
and one then obtains the following
\begin{thm}
\label{thm:max vol for convex body intro}Let $P$ be a convex body
contained in the positive octant with barycenter $b_{P}=(1,1,...,1).$
Then the volume of $P$ is maximal when $P$ is a regular simplex,
i.e. for $(n+1)$ times the unit-simplex: 
\[
\mbox{Vol}(P)\leq\frac{(n+1)^{n}}{n!}
\]

\end{thm}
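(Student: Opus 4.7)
The plan is to prove this convex-geometric statement by a direct convex-analytic argument paralleling, but not relying on, the toric-variety formulation behind Theorem~\ref{thm:max vol intro toric sing}. Note that simply invoking Corollary~\ref{cor:ehrhart intro} does not suffice: after translating by $-(1,\ldots,1)$ the body $P' := P - (1,\ldots,1)$ has barycenter $0$ and lies in $\bigcap_i\{y_i \ge -1\}$, but in its facet representation $\{\langle l_F,y\rangle \ge -a_F\}$ the values $a_F$ in non-coordinate directions may exceed $1$, as one sees for the triangle with vertices $(0,0),(2,0),(1,3)$, so the hypothesis of Corollary~\ref{cor:ehrhart intro} may fail. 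The target becomes $\mathrm{Vol}(P')\le(n+1)^n/n!$.

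The main object I would introduce is a Ding-type free-energy functional on convex potentials: for $u:\mathbb{R}^n\to\mathbb{R}$ strictly convex with super-linear growth and $\nabla u(\mathbb{R}^n)=\mathrm{int}(P')$, set
\[
D_{P'}(u) = -\log\int_{\mathbb{R}^n}e^{-u(x)}\,dx + \frac{1}{\mathrm{Vol}(P')}\int_{P'}u^*(p)\,dp,
\]
where $u^*$ is the Legendre transform. The Euler-Lagrange equation of $D_{P'}$ is the real Monge-Amp\`ere / mean-field equation $\det D^2 u = \lambda e^{-u}$; its solvability, by the convex-analytic counterpart of Theorem~\ref{thm:-existence of ke intro}, is equivalent to the barycenter of $P'$ lying at the origin, so the hypothesis on $P$ provides a critical potential $u_*$.

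The key input is then a sharp Moser-Trudinger estimate comparing $\sup_u D_{P'}(u)$ with the explicit reference value $-\log \mathrm{Vol}(P') + \log[(n+1)^n/n!]$, with equality attained when $P'$ is the origin-centred regular simplex and $u$ is the Fubini-Study potential of $\mathbb{P}^n$ in toric logarithmic coordinates. Evaluating $D_{P'}(u_*)$ via the Monge-Amp\`ere equation reduces this to an inequality between $-\log\mathrm{Vol}(P')$ and $-\log[(n+1)^n/n!]$, yielding $\mathrm{Vol}(P') \le (n+1)^n/n!$. The extension from polytopes (where the variational analysis is cleanest) to general convex bodies is obtained by Hausdorff approximation with control on barycenter and positive-octant containment.

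The main obstacle will be establishing the Moser-Trudinger inequality with the sharp constant $(n+1)^n/n!$ rather than a weaker one. A naive Jensen-type argument applied to the weight $(1+\sum x_i)^{-(n+1)}$, combined with $\int_{\mathbb{R}^n_{\ge 0}}(1+\sum x_i)^{-(n+1)}\,dx = 1/n!$, gives only $\mathrm{Vol}(P)\le (n+1)^{n+1}/n!$, and other elementary weights lead to bounds of order $e^n$. Recovering the sharp constant requires exploiting all $n$ barycenter identities $\int_{P'}y_i\,dy=0$ simultaneously with the convexity of $P'$, via a Pr\'ekopa-Leindler / log-concavity argument for the Legendre transform coupled with a rigidity analysis identifying the regular simplex as the unique extremizer.
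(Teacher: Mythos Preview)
Your architecture --- solve the real Monge--Amp\`ere equation using the barycenter hypothesis, then feed the solution into a Moser--Trudinger functional --- matches the paper's. The gap is the step you yourself flag as the ``main obstacle'': the sharp Moser--Trudinger bound you need, with constant $(n+1)^n/n!$, is essentially a reformulation of the theorem, and your closing appeal to ``a Prekopa--Leindler / log-concavity argument \ldots\ coupled with a rigidity analysis'' is not a proof plan but a restatement of the difficulty. (There is also some ambiguity about whether $u_*$ is a maximum or a minimum of $D_{P'}$, and what value $D_{P'}(u_*)$ actually takes; without this the reduction in your third paragraph does not go through.)

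The paper resolves this not by proving a sharp universal MT inequality, but by a contradiction argument on sublevel domains. Restricting the solution $\psi$ of $MA_{\R}(\psi)=e^{-\psi}d\nu_n$ (furnished by Theorem~\ref{thm:-existence of real k-e}) to $\Omega=\{\psi<R\}$ and rescaling produces a critical point of the MT functional $\mathcal G_\gamma$ on $\mathcal H_0(\Omega)$ with parameter $\gamma=V_\Omega^{1/n}$, where $V_\Omega\to n!\,\mathrm{Vol}(P)$ as $R\to\infty$; Prekopa's theorem (concavity of $t\mapsto-\log\int e^{-\phi_t}$ along geodesics) then shows $\mathcal G_\gamma$ is bounded above on $\mathcal H_0(\Omega)$ for \emph{that particular} $\gamma$. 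The sharp constant enters through an explicit test family: with $g_\Omega$ the Green function of $\Omega$ defined in \eqref{eq:def of convex green f}, modelled on $\log\sum_i e^{x_i}$ (the real analogue of the pluricomplex Green function with a pole at $0\in\C^n$), and $g_t=\log(e^{-2t}+e^{g_\Omega})$, a direct computation gives $\mathcal G_\gamma(g_t)\to+\infty$ as $t\to\infty$ whenever $\gamma>n+1$. Hence $\gamma\le n+1$, i.e.\ $\mathrm{Vol}(P)\le(n+1)^n/n!$. This Green-function test family, not a sharper form of Prekopa, is the missing ingredient in your plan. The paper also records a two-line alternative due to Klartag: Grunbaum's inequality applied to the half-space $\{\sum p_i\le n\}$ through the barycenter gives $(n/(n+1))^n\mathrm{Vol}(P)\le\mathrm{Vol}(P\cap n\Delta)\le n^n/n!$, which you may find a cleaner route.
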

Applying the previous theorem to a suitable affine transformation
of a given rational polytope gives us back \ref{cor:ehrhart intro}.

After we had proved the Theorem \ref{thm:max vol for convex body intro}
by the arguments outlined above, Bo'az Klartag showed us a short and
elegant direct proof of this statement using tools from convex geometry.
His argument, that we will reproduce below as Remark \ref{rem:grun},
is based on Grunbaum's inequality \cite{gr}. Nevertheless, we have
decided to keep here our original argument as well since it exemplifies
the relation of this kind of inequalities in convex geometry to Kähler
geometry. Also, our argument may also be useful when dealing with
other singular varities than toric ones, such as spherical varities.

Interestingly, the proof of Grunbaum's inequality is based on a clever
application of the Brunn-Minkowski inequality for convex bodies and
the latter inequality, or more precisely its functional form due to
Prekopa \cite{pr} also plays a key role in our proof (since it is
used in the proof of the Moser-Trudinger type inequalities). In fact,
using the positivity of the direct image bundles in \cite{bern1}
we will obtain a complex geometric generalization of Prekopa's result
in the presence of a suitable action of a compact Lie group on a Stein
manifold (Theorem \ref{thm:prekopa type}). This also leads to a generalization
of the Prekopa theorem in $\R^{n}$ to non-compact real symmetric
spaces of independent interest (Corollary \ref{cor:prekopa symmet}).
Since such spaces typically have negatively sectional curve this latter
result appears to be rather intruiging, when contrasted with the results
in \cite{c-m-s}, which demand non-negative Ricci curvature.

\subsection{\label{sub:Critical-mean-field}Critical mean field type equations
for $S^{1}-$invariant domains in $\C^{n}$}

As explained in section \ref{sub:The-volume-of} the starting point
of the proof of Theorem \ref{thm:max vol vector field intro} is that
an $S^{1}-$invariant Kähler-Einstein metric on $X$ induces a solution
$\phi$ to a mean field type equation on a Stein domain $\Omega$
of $X$ admitting an $S^{1}-$action with an attractive fixed point.
The proof is thus reduced to establishing a criticality result for
solutions of such equations. For concreteness, here we will only state
the result in the case when the domain $\Omega$ is contained in $\C^{n}$
with its standard action by $S^{1}$ (or more generally any linear
action of $S^{1}$ with positive weights $m_{i},$ i.e. defined by
$(e^{i\theta},(z_{1},...,z_{n}))\mapsto(e^{im_{1}\theta}z_{1},...,e^{im_{n}\theta}z_{n})).$
Denote by $dV$ the Euclidean volume element and write $dd^{c}=i\partial\bar{\partial}/2\pi,$
so that $(dd^{c}\phi)^{n}$ is the Monge-Ampère measure of $\phi,$
whose density is equal to $(\frac{i}{2\pi})^{n}$ times the determinant
of the complex Hessian $(\frac{\partial^{2}\phi}{\partial z_{i}\partial\bar{z}_{j}}).$ 
\begin{thm}
\label{thm:mean field intro}Let $\Omega$ be a connected smoothly
bounded $S^{1}-$invariant pseudoconvex domain in $\C^{n},$ containing
$0.$ For any given positive real number $\gamma$ a necessary condition
for the existence of an $S^{1}-$symmetric plurisubharmonic solution
$\phi\in\mathcal{C}^{\infty}(\bar{\Omega})$ to the equation \textup{
\begin{equation}
(dd^{c}\phi)^{n}=\frac{e^{-\gamma\phi}dV}{\int_{\Omega}e^{-\gamma\phi}dV}\,\,\mbox{in\,\,\ensuremath{\Omega}}\,\,\,\,\phi=0\,\,\mbox{on\,\ensuremath{\partial\Omega}}\label{eq:mean field type eq intro}
\end{equation}
} is that $\gamma\leq(n+1).$
\end{thm}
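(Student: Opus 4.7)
The plan is to interpret \eqref{eq:mean field type eq intro} as the Euler--Lagrange equation of a Ding-type functional and to exclude $\gamma>n+1$ by combining geodesic convexity (to upgrade criticality to minimality) with a sharp $S^{1}$-invariant Moser--Trudinger-type estimate realised on explicit test potentials. Let $\mathcal{H}_{S^{1}}$ denote the space of smooth $S^{1}$-invariant strictly plurisubharmonic functions $\psi$ on $\bar{\Omega}$ with $\psi|_{\partial\Omega}=0$, and set
\[
\mathcal{D}_{\gamma}(\psi)\;:=\;-E(\psi)\;+\;\tfrac{1}{\gamma}\log\int_{\Omega}e^{-\gamma\psi}\,dV,\qquad E(\psi)\;:=\;\tfrac{1}{(n+1)!}\int_{\Omega}(-\psi)(dd^{c}\psi)^{n}.
\]
A direct first-variation computation, using the Dirichlet boundary condition to integrate by parts twice in $E$, identifies the smooth $S^{1}$-invariant critical points of $\mathcal{D}_{\gamma}$ (up to an overall $n!$ normalization) with the smooth $S^{1}$-invariant solutions of \eqref{eq:mean field type eq intro}. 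It therefore suffices to show that $\mathcal{D}_{\gamma}$ has no critical point in $\mathcal{H}_{S^{1}}$ once $\gamma>n+1$.

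The first step is to promote any would-be critical point $\phi$ to a \emph{minimizer} of $\mathcal{D}_{\gamma}$ via geodesic convexity. The $S^{1}$-equivariant Prekopa-type statement announced in the introduction (Theorem \ref{thm:prekopa type}) yields the convexity of $\psi\mapsto\log\int_{\Omega}e^{-\gamma\psi}dV$ along $S^{1}$-invariant weak Mabuchi geodesics in the space of plurisubharmonic potentials with vanishing boundary values, while $-E$ is affine along such geodesics by standard Bedford--Taylor integration by parts. Hence $\mathcal{D}_{\gamma}$ is convex along geodesics and $\phi$ is necessarily a minimizer.

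The second step is to build an $S^{1}$-invariant test family $(\psi_{c})\subset\mathcal{H}_{S^{1}}$ along which $\mathcal{D}_{\gamma}(\psi_{c})\to-\infty$ as $c\to+\infty$ whenever $\gamma>n+1$, contradicting the minimality just established. The natural candidate is a smooth $S^{1}$-invariant regularization of a plurisubharmonic potential with a logarithmic singularity at the attractive fixed point $0\in\Omega$ and vanishing boundary values; one may, for instance, truncate $(n+1)\,g_{\Omega}(\cdot,0)$, where $g_{\Omega}(\cdot,0)$ is the pluricomplex Green function with pole at $0$, or work with a model Fubini--Study-type potential centred at $0$. Because the behaviour at $0$ only sees the local complex-geometric structure (which is just $\C^{n}$), the leading-order asymptotics can be computed on a small ball and are expected to yield an estimate of the form $\mathcal{D}_{\gamma}(\psi_{c})\asymp((n+1)-\gamma)\cdot F(c)$ with $F(c)\to+\infty$, so that the expression is driven to $-\infty$ precisely when $\gamma>n+1$.

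The main obstacle will be the sharp bookkeeping of these test-function asymptotics in order to pin the threshold down to the value $n+1$ exactly; this is essentially the sharp constant in a Moser--Trudinger-type inequality for $S^{1}$-invariant plurisubharmonic functions on $\Omega$. A cleaner route, aligned with the toric treatment in \cite{b-b1b}, is to work in the geodesically complete finite-energy class of \cite{bbegz}, substitute directly a singular model potential with Lelong number $n+1$ at $0$ and vanishing boundary values, and then use Demailly regularization together with a monotonicity argument to extend $\mathcal{D}_{\gamma}$ continuously and deduce the divergence to $-\infty$ from an explicit computation on the model potential.
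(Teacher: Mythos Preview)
Your strategy is exactly the paper's: interpret the equation as the Euler--Lagrange equation of a Ding-type functional, use the Prekopa-type result (Theorem~\ref{thm:prekopa type}) together with affineness of the energy along geodesics to upgrade a critical point to an extremum, and then violate the resulting Moser--Trudinger inequality with a family obtained by regularising the pluricomplex Green function with pole at $0$. However, two concrete errors in your write-up would prevent the argument from going through as stated.

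\emph{Normalization of $E$.} With $E(\psi)=\frac{1}{(n+1)!}\int_{\Omega}(-\psi)(dd^{c}\psi)^{n}$ the first variation of $\mathcal D_{\gamma}$ gives $\frac{1}{n!}(dd^{c}\psi)^{n}=\frac{e^{-\gamma\psi}dV}{\int e^{-\gamma\psi}dV}$, not \eqref{eq:mean field type eq intro}; so for $n\geq 2$ the solution $\phi$ of \eqref{eq:mean field type eq intro} is \emph{not} a critical point of your $\mathcal D_{\gamma}$, and the test-function balance does not land on the threshold $n+1$. The correct prefactor is $\frac{1}{n+1}$ (as in the paper's $\mathcal E/(n+1)$); the ``up to an $n!$ normalization'' remark does not repair this, since the whole point is that $\phi$ must be critical for \emph{this} functional.

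\emph{Direction of convexity.} Theorem~\ref{thm:prekopa type} asserts that $t\mapsto -\log\int_{\Omega}e^{-\phi_{t}}$ is subharmonic, hence (for real geodesics) \emph{convex}; equivalently $t\mapsto \log\int_{\Omega}e^{-\gamma\psi_{t}}dV$ is \emph{concave}, not convex. Thus $\mathcal D_{\gamma}$ is concave along geodesics, $\phi$ is a \emph{maximizer}, and the contradiction you want is $\mathcal D_{\gamma}(\psi_{c})\to +\infty$ when $\gamma>n+1$. The paper's explicit computation with $g_{t}:=\log(e^{-2t}+e^{g_{\Omega}})-C_{t}$ gives, after bookkeeping, $\mathcal D_{\gamma}(g_{t})\geq 2t\cdot\frac{n(\gamma-(n+1))}{\gamma(n+1)}-O(1)$, which indeed diverges to $+\infty$ precisely for $\gamma>n+1$; your predicted sign $((n+1)-\gamma)F(c)$ is the opposite. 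Once you fix the $\frac{1}{n+1}$ prefactor and flip concave/convex and $-\infty/+\infty$ accordingly, your outline coincides with the paper's proof; the vaguer alternative via finite-energy classes and Demailly regularisation is unnecessary here, since the explicit family $g_{t}$ already lies in $\mathcal H_{0}(\Omega)^{S^{1}}$ and the asymptotics are elementary.
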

More generally, the proof of the previous theorem shows that it is
enough to assume that $\phi$ is a continuous solution in the sense
of pluripotential theory. As shown in \cite{b-b} $\gamma<(n+1)$
is a \emph{sufficient} condition for existence of such weak solutions
for\emph{ any} pseudoconvex domain (by \cite{gky} any such continuous
solution is in fact smooth in the interior of $\Omega).$ Hence $\gamma=n+1$
appears to be a\emph{ critical} parameter for the equations \ref{eq:mean field type eq intro}
on $\Omega$ in the presence of an $S^{1}-$action as above. Moreover,
the condition in the previous theorem that the fixed point $0$ be
contained in $\Omega$ is crucial. For example, if $\Omega$ is an
annulus in $\C$ then it is well-known that there exist $S^{1}-$invariant
solutions for $\gamma$ arbitrarily large (see section 5 in \cite{clmp}).

To prove the theorem we first show that any solution $\phi$ as in
the previous theorem is an extremal for a Moser-Trudinger type inequality
on $\Omega,$ which becomes stronger as the parameter $\gamma$ increases.
We then go on to show that when $\gamma>(n+1),$ a suitable regularization
of the pluricomplex Green function $g$ of $\Omega$ with a logarithmic
pole at $0$ violates the corresponding Moser-Trudinger type inequality.

This approach should be compared with a result of Ding-Tian \cite{d-t}
saying that for \emph{any} Fano manifold $X$ admitting a Kähler-Einstein
metric there is a corresponding Moser-Trudinger type inequality for
positively curved metrics on the anti-canonical line bundle $-K_{X}$
which has the metric on $-K_{X}$ induced by the Kähler-Einsten metric
as an extremal. However, it is well-known that there are global obstructions
to the existence of metrics on $-K_{X}$ with prescribed logarithmic
poles and this is the reason that we need to replace $X$ with a Stein
subdomain $\Omega.$ The prize we have to pay is that an appropriate
symmetry assumption is then needed to deduce the corresponding Moser-Trudinger
type inequalities on $\Omega$ (compare the discussion on symmetry
breaking in section \ref{sub:Symmetry-breaking}).

We conjecture that the inequality $\gamma\leq n+1$ in the previous
theorem is in fact a \emph{strict} inequality and that if $\phi_{j}$
is a sequence of $S^{1}-$symmetric solutions $\phi_{j}$ associated
to a sequence of parameters $\gamma_{j}$ converging to the critical
value $n+1,$ then $\phi_{j}$ converges weakly to the pluricomplex
Green function $g_{\Omega}$ of $\Omega$ with a logarithmic pole
at $0.$ This is easy to verify in the case when $\Omega$ is the
unit-ball and $\phi_{j}$ is the radial solution (see \cite{b-b}).
The motivation for this conjecture comes from the concentration-compactness
principles extensively studied in the one-dimensional situation (see
\cite{clmp} and references therein). As explained in \cite{clmp}
the equations above then appear as mean field equations for statistical
mechanical models with $\gamma$ playing the role of the inverse temperature
and the critical value corresponding to a phase transition.

\subsection{Organization}

After having set up the complex geometric and group-theoretic frame
work in the beginning of Section \ref{sec:The-volume-of-k-e} we establish
Prekopa type convexity inequalites and give the proof of Theorem \ref{thm:max vol vector field intro},
by reducing it to the proof of Theorem \ref{thm:mean field intro}.
Then in Section \ref{sec:The-volume-of-singular-toric} the singular
setting on a toric variety is considered and the proof of Theorem
\ref{thm:max vol intro toric sing} is explained, using real convex
analysis. Finally, in Section \ref{sec:Symplectic-geometry,-multiplicity}
Theorem \ref{thm:max vol vector field intro} is applied to spherical
varieties and rephrased in terms of Lie algebras and symplectic geometry.

\subsection{Acknowledgments}

We are grateful to Benjamin Nill for helpful comments on the toric
setting, Michel Brion for his help with spherical varieties, Gabor
Székelyhidi for encouraging us to consider the relation to the invariant
$R(X)$ and Bo'az Klartag for allowing us to include here his beautiful
reduction to Grunbaum's inequality.

\section{\label{sec:The-volume-of-k-e}The volume of Kähler-Einstein Fano
manifolds}

\subsection{Preliminaries}

\subsubsection{Kähler-Einstein metrics and Monge-Ampère equations}

Let $L\rightarrow X$ be a holomorphic line bundle over an $n-$dimensional
compact complex manifold. We will denote by $H^{0}(X,L)$ the space
of all global holomorphic sections with values in $L.$ A Hermitian
metric $\left\Vert \cdot\right\Vert $ on $L$ may be represented
by a collection of local functions $\phi(:=\{\phi_{U}\})$ defined
as follows: given a local trivializing section $s$ of $L$ on an
open subset $U\subset X$ we define the local weights $\phi_{U}:=-\log\left\Vert s\right\Vert ^{2}$
of the metric. Of course, $\phi_{U}$ depends on $s,$ but the (normalized)\emph{
curvature form }of the metric\emph{ }
\[
dd^{c}\phi_{U}:=\frac{i}{2\pi}\partial\bar{\partial}\phi
\]
 is a globally well-defined two form on $X$ representing the first
Chern class $c_{1}(L).$ The normalizations have been chosen so that
$c_{1}(L)$ is an \emph{integral }class. Note that the metric on $L$
has \emph{semi-positive curvature} form precisely when the local weights
$\phi_{U}$ are \emph{plurisubharmonic} \emph{(psh,} for short). We
recall that according to the Kodaira embedding theorem the line bundle
$L$ is\emph{ ample, }i.e. the Kodaira map $X\rightarrow\P(H^{0}(X,L))^{*}$
is an embedding for $k$ sufficiently large, precisely when $L$ admits
a metric with positive curvature. For any such $k$ the open manifold
$S:=\{s=0\},$ for $s$ a given non-trivial element in $H^{0}(X,kL),$
is a\emph{ Stein manifold}, i.e. $S$ admits a smooth and strictly
plurisubharmonic exhaustion-function $\phi_{S}.$ Indeed, $\phi_{S}$
can be taken as $-\log\left\Vert s\right\Vert ^{2}$ for any positively
curved metric on $L.$

A Kähler metric $\omega$ on $X$ is said to be \emph{Kähler-Einstein}
if it has constant Ricci curvature, i.e. 
\[
\mbox{Ric \ensuremath{\omega=\Lambda\omega}}
\]
 for some constant $\Lambda.$ We will be interested in the case when
$\Lambda$ is positive and after a scaling we may as well assume that
$\Lambda=1.$ Then $X$ is necessarily a \emph{Fano manifold,} i.e.
the dual $-K_{X}$ of the canonical line bundle $K_{X}:=\Lambda^{n}(T^{*}X)$
is ample. Equivalently, $\omega$ is a Kähler-Einstein metric on the
Fano manifold $X$ iff $\omega$ is the curvature of a positively
curved metric $\left\Vert \cdot\right\Vert $ on $-K_{X}$ such that,
if $s$ is a local trivialization of $-K_{X}$ over $U,$ then there
is a positive constant $C$ such that the following Monge-Ampère equation
holds on $U:$ 
\[
(dd^{c}\phi)^{n}=Ce^{-\phi}dV,\,\,\,\, dV:=i^{n^{2}}\theta\wedge\bar{\theta}
\]
 where $\phi$ is the corresponding weight of the metric and $\theta$
is the holomorphic $n-$form which is dual to $s.$ More generally,
given a positive integer $k$ the local section $s$ can be replaced
by a local non-vanishing holomorphic section $s_{k}$ of $-kK_{X}\rightarrow U$
and one then sets $\phi:=-\frac{1}{k}\log\left\Vert s_{k}\right\Vert ^{2},$
replacing the dual $\theta$ above with its $k$th root. We remark
that this latter flexibility allows one to define Kähler-Einstien
metrics on a \emph{singular} (normal) variety $X$ (see \cite{bbegz}).
Indeed, in general $K_{X}$ is then merely defined as a Weil divisor,
but assuming that $X$ is a Fano variety, i.e. $-kK_{X}$ is an ample
line bundle for some positive integer $k$ (in other words $K_{X}$
is an ample $\Q-$Cartier divisor) the previous definition makes sense
on the regular locus of $X$ and one then adds the global condition
that the corresponding metric be\emph{ continuous }on all of $X.$
Anyway, in this paper we will mainly stick to the case when $X$ is\emph{
smooth.}

\subsubsection{Group actions}

Let $G$ be a complex Lie group acting by holomorphisms on a compact
complex manifold $X.$ In other words, $X$ is a compact complex $G-$manifold.
If $G$ acts linearly on a vector space $V,$ we write $V^{G}$ for
the subspace of $G-$invariant vectors and $V^{(G)}$ for the subspace
of $G-$eigenvectors, i.e. $v\in V^{(G)}$ iff $gv=\chi(g)v,$ where
$\chi$ is a \emph{character}, i.e. a homomorphism from $G$ to $\C^{*}.$
In particular, if $L$ is a $G-$equivariant line bundle over $X$
then $G$ acts linearly on the vector space $H^{0}(X,L)$ by setting
$(g\cdot s)(p):=g(s(g^{-1}p))$ for any $s\in H^{0}(X,L).$

In the proof of Theorem \ref{thm:max vol vector field intro} we will
have great use for the following 
\begin{lem}
\label{lem:exist of section}Let $X$ be a smooth projective variety
admitting a holomorphic action by the circle $S^{1}$ with isolated
fixed points. If the first Betti number of $X$ vanishes and $L$
is a given $S^{1}-$equivariant ample line bundle over $X,$ then
there exists a fixed point $p\in X$ and an $S^{1}-$eigenvector $s\in H^{0}(X,kL)$
for some $k>0$ such that $s(p)\neq0$ and 
\begin{equation}
H^{0}(S)^{T}=\C,\label{eq:lemma exist of section}
\end{equation}
 where $S$ is the Stein manifold $S:=X-\{s=0\}$ containing $p.$\end{lem}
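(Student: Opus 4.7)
The plan is to single out a distinguished fixed point $p$ using the moment map of the $S^{1}$-action and to construct $s$ from the linearised data at $p$; the point is that near such a $p$ the complexified $\C^{*}$-action on $X$ looks like a linear action on $\C^{n}$ with weights all of the same sign, so that invariant theory becomes transparent. Equip $L$ with an $S^{1}$-invariant Hermitian metric of positive curvature $\omega\in c_{1}(L)$; the $S^{1}$-equivariant structure of $L$ then provides a moment map $\mu:X\to\R$ for the $S^{1}$-action on $(X,\omega)$, with the hypothesis $b_{1}(X)=0$ being used to pin down the lift of the action. Let $p$ be the point at which $\mu$ attains its maximum. Since $p$ is an isolated fixed point, all weights $m_{1},\dots,m_{n}$ of $S^{1}$ on $T_{p}X$ are nonzero and of the same sign; after possibly reversing the $S^{1}$-action, assume $m_{i}>0$, and denote by $w$ the weight of $S^{1}$ on the fibre $L_{p}$.

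For $k$ so large that $kL$ is very ample, the evaluation map $\mathrm{ev}_{p}:H^{0}(X,kL)\to L_{p}^{\otimes k}$ is surjective and $S^{1}$-equivariant, with one-dimensional target on which $S^{1}$ acts by weight $kw$. Decomposing $H^{0}(X,kL)$ into $S^{1}$-eigenspaces $V_{\chi}$, only $V_{kw}$ maps nontrivially, so one may choose $s\in V_{kw}$ with $s(p)\neq 0$. The central step is to show that $S:=X\setminus\{s=0\}$ contains the attracting cell $U_{p}:=\{q\in X:\lim_{t\to 0}t\cdot q=p\}$ of the complexified $\C^{*}$-action. Since every tangent weight at $p$ is positive, the Bialynicki-Birula decomposition identifies $U_{p}$ with $T_{p}X\cong\C^{n}$ carrying the linear $\C^{*}$-action of weights $m_{i}$. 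On $U_{p}$ pick a $\C^{*}$-equivariant trivialisation $e$ of $L|_{U_{p}}$ of weight $w$, and write $s|_{U_{p}}=f\,e^{\otimes k}$; then $f$ is a $\C^{*}$-invariant holomorphic function on $\C^{n}$. Expanding $f=\sum_{I}a_{I}z^{I}$, invariance forces $\sum_{j}i_{j}m_{j}=0$ on the support of the series, and since all $m_{j}>0$ only $I=0$ survives; hence $f\equiv s(p)/e(p)^{\otimes k}\neq 0$ and $s$ is nowhere zero on $U_{p}$, giving $U_{p}\subseteq S$.

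Finally, any $T$-invariant holomorphic function $h$ on $S$ restricts to a $T$-invariant holomorphic function on $U_{p}\cong\C^{n}$, and the same Taylor-expansion argument shows $h|_{U_{p}}$ is a constant. Because $\{s=0\}$ is a complex hypersurface in the connected manifold $X$, its complement $S$ is still connected, and analytic continuation forces $h$ to be constant throughout $S$. The main technical hurdle is the passage from local linearisation near $p$ to a genuinely open attracting cell $U_{p}$ biholomorphic to $\C^{n}$: this relies on the Bialynicki-Birula theorem for an isolated fixed point with all tangent weights of the same sign, without which one would control $s$ only on a small neighbourhood of $p$ rather than on an open piece of $X$ large enough to pin down every $T$-invariant holomorphic function globally.
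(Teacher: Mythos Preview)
Your argument is correct and follows the same overall strategy as the paper: locate an attractive fixed point $p$ via the moment map, pick an eigensection $s$ with $s(p)\neq 0$, and exploit the positivity of the tangent weights at $p$ to see (via Taylor expansion) that $T$-invariant holomorphic functions are constant near $p$, then propagate by connectedness of $S$.

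The one notable difference is that you invoke the Bialynicki--Birula decomposition to produce a global affine cell $U_{p}\cong\C^{n}$ contained in $S$, whereas the paper uses only a small $S^{1}$-invariant coordinate neighbourhood of $p$, obtained from Kaup's local linearisation theorem. Your closing remark overstates what is needed: since $S$ is connected, an invariant holomorphic function that is constant on \emph{any} open neighbourhood of $p$, however small, is already constant on all of $S$ by analytic continuation; no large cell is required. The paper in fact records the inclusion $U_{p}\subset S$ only as a digression after the lemma (and proves it differently, via properness of the psh exhaustion $-\log\|s\|^{2}$ along $\C^{*}$-orbits). What your extra work buys is the explicit identification of the eigenweight of $s$ and the nonvanishing of $s$ on all of $U_{p}$, but neither is needed for the conclusion $H^{0}(S)^{T}=\C$.
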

\begin{proof}
Since $L$ is ample it admits a metric with positive curvature form
$\omega,$ defining a symplectic form on $X.$ Moreover, averaging
over the compact group $S^{1}$ we may assume that $\omega$ is $S^{1}-$invariant,
i.e. the action is symplectic. Since the first Betti number of $X$
vanishes the action admits a Hamiltonian function $f,$ i.e. $df=\omega(V,\cdot)$
where $V$ is the vector field on $X$ generating the $S^{1}-$action.
It then follows from general principles that the action lifts to $L.$
Anyway, in our setting $L$ will be equal to $-K_{X}$ which admits
a canonical lift of the $S^{1}-$action on $X.$ We let $p$ be a
point where the minimum of $f$ is attained. Then $V$ vanishes at
$f$ and hence $p$ is a fixed point. Next, we pick a positive number
$k$ such that $kL$ is globally generated and decompose $H^{0}(X,kL)=\oplus V_{m}$
in the one-dimensional eigenspaces for the $S^{1}-$action. By the
assumption of global generation there is a section $s\in H^{0}(X,kL)$
such that $s(p)\neq0$ and by the previous decomposition we can thus
take $s\in V_{m}$ for some $m.$ Let now $S:=X-\{s=0\},$ which,
as explained above, is a Stein manifold. To prove \ref{eq:lemma exist of section}
we note that the action of $S^{1}$ on the tangent space at the fixed
point $p$ has\emph{ positive }weights in the following sense (i.e.
it is an \emph{attractive fixed point}) : by a general result for
compact Lie group (see Satz 4.4 in \cite{kau}) we may linearize the
action in an invariant neighborhood of $U$ of $p$ so that $e^{i\theta}\cdot(z_{1},...,z_{n})=(e^{im_{1}\theta}z_{1},...,e^{im_{n}\theta}z_{n}).$
The positivity referred to above then amounts to having $m_{i}>0$
for all $i.$ Indeed, after performing a linear change of coordinates
we may assume that $f(z)=\sum m_{i}|z_{i}|^{2}+o(|z|^{2})$ and since
$f$ has a minimum at $p$ (corresponding to $z=0)$ and the fixed
point $p$ is isolated it must be that $m_{i}>0.$ Taylor expanding
a given holomorphic function $g$ on $U$ wrt the variables $z_{i}$
then reveals that $g$ is $S^{1}-$invariant only if it is constant
on $U.$ But since $S$ is connected this concludes the proof of the
lemma.\end{proof}
\begin{example}
Set $X=\P^{1}$ and $L=-K_{X}$ with its usual $S^{1}-$action obtained
by identifying $X$ with the two-sphere and considering rotations
around a fixed axes. Then $X$ has two fixed points (the north and
the south pole) and fixing the standard affine chart $U_{0}\approxeq\C$
containing the south pole, where the action is given by $(e^{i\theta},z)\mapsto e^{i\theta}z,$
we can write any section $s\in H^{0}(X,-K_{X})$ over $U_{0}$ as
$s=f(z)\frac{\partial}{\partial z}$ so that $(e^{i\theta}\cdot s)(z)=e^{i\theta}f(e^{-i\theta}z)dz.$
Hence, we can take the section $s$ in the previous lemma as the one
determined by $f(z)=1$ (which has weight $m=1)$ so that $S=U_{0}.$ 
\end{example}
Even though if it will not - strictly speaking - be needed for the
proof of Theorem \ref{thm:max vol vector field intro}, we make a
brief digression to explain how, using the Bialynicki- Birula decomposition
(see Theorem 4.4 in \cite{bi-b}), one may, essentially, take the
Stein domain $S$ to be equivariantly isomorphic to $\C^{n}$ with
a linear $\C^{*}-$action. First recall that the Bialynicki- Birula
decomposition says that any non-singular $n-$dimensional projective
variety $X$ with a $\C^{*}-$action having only isolated fixed points
may be written as the disjoint union $X=\coprod_{p}X_{p}$ where $p$
ranges over the fixed point $p$ in $X$ and where $X_{p}$ is $\C^{*}-$
invariant set equivariantly isomorphic to the affine space $T^{+}X_{|p},$
i.e. the the direct sum of the positive weight spaces in $TX_{|p}$
with the induced linear $\C^{*}-$action. Concretely, $X_{p}$ is
the \emph{attracting set} for $p$ under the $\C^{*}-$action , i.e.
$x\in X_{p}$ iff $\lim_{\lambda\rightarrow0}\lambda\cdot x=p.$ In
particular, if the all the weights at $p$ are all positive (as for
$p$ in the previous lemma) then $X_{p}$ is a Zariski open subvariety
of $X$ which is equivariantly isomorphic to $\C^{n}$ with a linear
$\C^{*}-$action. We note that 
\[
X_{p}\subset S,
\]
where $S=\{s\neq0\}$ is the Stein manifold appearing in the previous
lemma. Moreover, given a positively curved $S^{1}-$invariant metric
on $L$ the function $\phi:=-\log\left\Vert s\right\Vert ^{2}$ is
a psh exhaustion function of $X_{p}.$ To see that $\phi$ is indeed
proper we note that since $\phi$ is strictly convex along the $\C^{*}-$orbits
and (since $s(p)\neq0)$ is bounded from below close to $p$ it follows
that $\phi\rightarrow\infty$ as $\left|\lambda\right|\rightarrow\infty$
along a given $\C^{*}-$orbit in $X_{p},$ which implies properness
by a basic compactness argument. As for the $S^{1}-$invariance it
follows from the fact that $s$ is an eigenvector (as explained in
the beginning of section \ref{sub:The-volume-of} below).

\subsection{A Prekopa type convexity result on Stein manifolds under group actions}

One of the key ingredients in the proof of Theorem \ref{thm:max vol vector field intro}
is a convexity result of Prekopa type. Let us first recall the Prekopa
inequality in its original form \cite{pr}: If $\phi_{t}(x)$ is a
convex function on $I\times\R^{n}$, where $I$ is an open interval
in $\R$ with coordinate $t,$ then the function 
\[
t\mapsto-\log\int_{\R^{n}}e^{-\phi_{t}}dx
\]
 is convex. Here we will obtain a complex geometric generalization
of this result. We let $S$ be an $n-$dimensional Stein manifold
with trivial canonical line bundle $K_{S},$ i.e. $S$ admits a non-vanishing
holomorphic $n-$form $\theta$ (also called a \emph{holomorphic volume
form}).
\begin{thm}
\label{thm:prekopa type}Let $K$ be a compact group acting on a bounded
Stein domain $\Omega$ with a holomorphic volume form $\theta$ and
such that all $K-$invariant holomorphic functions are constant, i.e.
\begin{equation}
H^{0}(\Omega)^{K}=\C.\label{eq:prekopa type}
\end{equation}
 If $\phi_{t}(z)$ is a $K-$invariant bounded psh function on $D\times\Omega,$
where $D$ is the unit-disc in $\C$ then the function 
\begin{equation}
t\mapsto-\log i^{n^{2}}\int_{\Omega}e^{-\phi_{t}}\theta\wedge\bar{\theta},\label{eq:statement theorem prek}
\end{equation}
 is subharmonic in $t.$ More generally, if $\Omega$ is unbounded
and $e^{-\phi_{t}}$ is integrable on $\Omega$ for $t$ fixed, the
same conclusion holds if the space $H^{0}(\Omega)$ is replaced by
$H^{0}(\Omega)\cap L^{2}(e^{-\phi_{t}}\theta\wedge\bar{\theta}).$\end{thm}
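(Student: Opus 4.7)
The natural framework for this statement is Berndtsson's theorem on positivity of direct image bundles \cite{bern1}, and the plan is to deduce the conclusion from it by restricting to a distinguished rank-one holomorphic subbundle singled out by the $K$-action. First I would set up the trivial holomorphic fibration $\pi\colon D\times\Omega\to D$, equipped with the weight $\phi(t,z)=\phi_t(z)$, which is psh on the total space. Since $\theta$ trivializes $K_\Omega$, the relative canonical bundle $K_{D\times\Omega/D}$ is trivial, and the direct image
\[
\mathcal{H}:=\pi_\ast\bigl(K_{D\times\Omega/D}\otimes e^{-\phi}\bigr)
\]
is the Hilbert bundle over $D$ with fibers $\mathcal{H}_t=\{u\theta:u\in H^0(\Omega),\ i^{n^2}\!\int_\Omega|u|^2e^{-\phi_t}\theta\wedge\bar\theta<\infty\}$, equipped with the natural $L^2$ inner product.

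Berndtsson's theorem \cite{bern1} then asserts that $\mathcal{H}$ carries a Hermitian metric of Griffiths semi-positive curvature; equivalently, $t\mapsto-\log\|s(t)\|_t^2$ is subharmonic on $D$ for any holomorphic section $s$ of $\mathcal{H}$. Applied to the tautological constant section $s(t)\equiv\theta$---which is holomorphic and lies in $\mathcal{H}_t$ for every $t$ (automatic in the bounded case since $\phi_t$ is bounded, and built into the hypotheses in the unbounded case)---this yields that
\[
-\log\|s(t)\|_t^2=-\log i^{n^2}\!\int_\Omega e^{-\phi_t}\,\theta\wedge\bar\theta
\]
is subharmonic in $t$, which is exactly the claim.

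The role of the $K$-invariance of $\phi_t$ together with $H^0(\Omega)^K=\C$ is to guarantee that $\theta$ spans, up to scalars, the entire $K$-invariant subbundle $\mathcal{H}^K\subset\mathcal{H}$: every $K$-invariant section corresponds to a $K$-invariant holomorphic function on $\Omega$ and hence to a constant multiple of $\theta$. Thus $\mathcal{H}^K$ is a genuine holomorphic line subbundle on which Berndtsson's positivity reduces to ordinary subharmonicity of $-\log$ of the natural metric; this lets one replace the possibly infinite-rank Hilbert bundle $\mathcal{H}$ by the line bundle $\mathcal{H}^K$ throughout, which is conceptually cleaner and fits well with the applications to mean field equations.

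The main anticipated obstacle is technical: verifying Berndtsson's positivity in this setting of possibly infinite-rank direct image, and in the unbounded case where $\Omega$ is not compact. To handle this I would exhaust $\Omega$ by relatively compact, smoothly bounded, $K$-invariant pseudoconvex subdomains $\Omega_j\nearrow\Omega$, apply the result on each $D\times\Omega_j$ where the Hilbert bundle formalism is standard, and pass to the limit using the integrability hypothesis and dominated convergence. A more self-contained alternative would be to compute $\partial_t\partial_{\bar t}$ of the integral directly and use H\"ormander $L^2$ estimates for $\bar\partial$ on $\Omega$ to resolve the resulting extension problem $K$-equivariantly---in which case $H^0(\Omega)^K=\C$ would enter as the statement that the space of $K$-invariant holomorphic obstructions is trivial modulo constants.
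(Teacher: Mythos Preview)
Your central claim---that Griffiths semi-positivity of $\mathcal{H}$ is \emph{equivalent} to $t\mapsto -\log\|s(t)\|_t^2$ being subharmonic for every holomorphic section $s$ of $\mathcal{H}$---is false when $\mathcal{H}$ has rank larger than one. Curvature \emph{decreases} in holomorphic subbundles, so the line subbundle spanned by the constant section $\theta$ need not inherit any positivity from $\mathcal{H}$. This is not a technicality: the paper itself remarks (just after the proof) that without the $K$-symmetry the conclusion fails, by a counterexample adapted from Kiselman. So your argument as written would prove the subharmonicity with no hypothesis on $K$ at all, which is impossible.

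Your subsequent paragraph about $\mathcal{H}^K$ is closer to a correct proof, but you present it as a conceptual aside rather than the crux. To make it work you must argue that the $K$-isotypic decomposition of $\mathcal{H}$ is both holomorphic (it is $t$-independent, since $K$ acts only on $\Omega$) \emph{and} orthogonal for every $t$ (because each $\phi_t$ is $K$-invariant, so the $L^2$ metric is $K$-invariant and distinct isotypes are orthogonal). Only then does the second fundamental form of $\mathcal{H}^K\subset\mathcal{H}$ vanish, so that the curvature restricts and $\mathcal{H}^K$ is a genuinely semi-positive line bundle; at that point your computation with $s\equiv\theta$ goes through.

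The paper avoids this subbundle issue by working on the dual side, where curvature behaves the right way automatically. It takes as holomorphic section of $E^*$ the $K$-averaging functional $\Lambda(f)=\int_K k^*f\,d\sigma$, which lands in $H^0(\Omega)^K=\C$ and hence is a well-defined linear functional. Positivity of $E$ means $E^*$ is Griffiths semi-negative, so $\log\|\Lambda\|_{*}^2$ is subharmonic; then a one-line Cauchy--Schwarz argument using $K$-invariance of $\phi_t$ gives $\|\Lambda\|_*^2=\big(\int_\Omega e^{-\phi_t}\theta\wedge\bar\theta\big)^{-1}$, and the theorem follows. The two routes are dual to each other, but the paper's requires no splitting argument.
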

\begin{proof}
Consider the (infinite dimensional) Hermitian holomorphic vector bundle
$E\rightarrow D$ whose fiber $E_{t}$ is the Hilbert space of all
holomorphic functions on $\Omega$ of finite $L^{2}-$norm 
\[
\left\Vert f\right\Vert _{\phi_{t}}^{2}:=i^{n^{2}}\int_{\Omega}|f|^{2}e^{-\phi_{t}}\theta\wedge\bar{\theta},
\]
 As shown in \cite{bern1} this bundle has \emph{positive curvature}
in the following sense: for any given holomorphic section $\Lambda$
of the dual bundle $E^{*}$ the function 
\[
t\mapsto-\log(\left\Vert \Lambda\right\Vert _{\phi_{t}}^{2}):=-\log(\sup_{f\in E_{t}}\left|\left\langle \Lambda,f\right\rangle \right|^{2}/\left\Vert f\right\Vert _{\phi_{t}}^{2})
\]
is subharmonic. Strictly speaking the proof in \cite{bern1} concerned
the case when $\Omega$ is a pseudoconvex domain in $\C^{n},$ but
the proof can be repeated word for word in the Stein case, using that
Hörmander's $L^{2}-$estimates for $\bar{\partial}$ are still valid.
We now let $\sigma$ be the invariant probability measure on $K$
(i.e. the Haar measure) and set 
\[
\Lambda(f):=\int_{K}(k^{*}f)\sigma
\]
Since the rhs above is a $K-$invariant holomorphic function on $\Omega$
it is, by assumption, constant (for $t$ fixed) and hence $\Lambda$
indeed defines a holomorphic section of $E^{*}.$ Thus it will be
enough to show that, under the condition \ref{eq:prekopa type}, $\left\Vert \Lambda\right\Vert _{\phi_{t}}^{2}=1/\int_{\Omega}e^{-\phi_{t}}\theta\wedge\bar{\theta},$
i.e. the sup above is attained for constant functions. But this follows
immediately from estimating 
\[
\int_{\Omega}e^{-\phi_{t}}\theta\wedge\bar{\theta}\left\Vert \Lambda\right\Vert _{\phi_{t}}^{2}\leq\int_{K}d\sigma\int_{\Omega}e^{-\phi_{t}}|k^{*}f|^{2}\theta\wedge\bar{\theta}=\int_{\Omega}e^{-\phi_{t}}|f|^{2}\theta\wedge\bar{\theta},
\]
 using that $\phi_{t}$ is $K-$invariant in the last equality.
\end{proof}
For example, if $\Omega=S=\C^{n},$ $\theta=dz$ and $\phi_{t}$ grows
as $(n+1)\log(|z|^{2})+O(1)$ at infinity one can take the compact
group $K$ in the theorem to be trivial, i.e. no symmetry assumption
is needed. The point is that, viewing $\C^{n}$ as a Zariski open
set in $X:=\P^{n}$ the space $H^{0}(\C^{n})\cap L^{2}(e^{-\phi_{t}})$
can be identified with $H^{n,0}(X,-K_{X})\approxeq H^{0}(\P^{n},\C)$
which is one-dimensional. However, for a general psh function $\phi_{t}$
in $\C^{n}$ one can construct counter-examples to the subharmonicity
in formula \ref{eq:prekopa type}, by adapting an example of Kiselman
\cite{ki} to the present setting. 
\begin{rem}
Replacing $\phi$ by $m\phi$ and letting $m\rightarrow\infty$ in
the previous theorem shows that the function 
\[
t\mapsto\inf_{\Omega}\phi_{t}
\]
 is also subharmonic in $t.$ This property is a well-known instance
of the Kiselman minimum principle \cite{ki} extended to the setting
of Lie groups by Loeb \cite{lo}. It should however be pointed out
that the setting in \cite{lo} is more general as it applies to certain
non-compact groups $K.$ 
\end{rem}

\subsubsection*{An application to symmetric spaces}

Before continuing we make a brief digression, showing how the previous
theorem implies a Prekopa type convexity inequality for real \emph{symmetric
spaces}. More precisely, we consider a non-compact symmetric space
with compact dual $K,$ i.e. the symmetric space may be written as
$G/K$ where $G$ is the complexification of the compact group $K$
(so that $G$ is a connected reductive complex Lie group and for simplicity
we will assume that $G$ is semi-simple \cite{he}). As is well-known
$G$ is a Stein manifold. More precisely, by results of Chevalley
$G$ is an affine algebraic variety (see the appendix in \cite{ma1})
with a $G$-bi-invariant pseudo-Riemannian metric (induced by the
Killing form on the Lie algebra of $G$). The corresponding $K-$principal
fiber bundle 
\[
\pi:\, G\rightarrow G/K
\]
 induces a $G-$bi-invariant symmetric Riemannian metric on $G/K.$
There is a $G-$bi-invariant volume form $\mu_{G}$ on $G$ and it
can be written as $\mu_{G}=\theta\wedge\bar{\theta},$ where $\theta$
is a $G-$bi-invariant holomorphic top form on $G.$ The push-forward
of $\mu_{G}$ under the projection $\pi$ clearly coincides with the
$G-$bi-invariant volume form $\mu_{G/K}$ on the symmetric space
$G/K.$ 
\begin{cor}
\label{cor:prekopa symmet}Let $\phi_{t}$ be a geodesically convex
function on the Riemannian product $\R\times G/K,$ where $\R$ is
the real line with its Euclidean Riemannian metric. Then the function
\[
t\mapsto-\log\int_{G/K}e^{-\phi_{t}}\mu_{G/K}
\]
is convex on $\R.$\end{cor}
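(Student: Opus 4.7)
The strategy is to reduce the statement to the complex Prekopa-type inequality of Theorem \ref{thm:prekopa type}, by pulling $\phi_{t}$ back from $\R \times G/K$ to $\C \times G$ and invoking the classical correspondence between geodesic convexity on the non-positively curved symmetric space $G/K$ and plurisubharmonicity of $K$-invariant functions on its Stein ``complexification'' $G$.

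To begin, extend the real parameter $t$ to a complex variable $\tau \in \C$ and define
\[
\tilde{\phi}(\tau, g) := \phi_{\mathrm{Re}(\tau)}(\pi(g)) \qquad \text{on } \C \times G.
\]
By construction $\tilde{\phi}$ is right-$K$-invariant in $g$ (with $K$ acting on $G$ by right multiplication, so that $\pi$ is the quotient map) and independent of $\mathrm{Im}(\tau)$. The key claim is that $\tilde{\phi}$ is plurisubharmonic on $\C \times G$. For the $G$-factor, this amounts to the classical equivalence, due to Lassalle and Azad--Loeb, between geodesic convexity of a function on $G/K$ and plurisubharmonicity of its pullback to $G$: using the Cartan decomposition $G = K \cdot \exp(i\mathfrak{k})$, for $X \in \mathfrak{k}$ and $g_{0} \in G$ the holomorphic line $s \in \C \mapsto g_{0}\exp(sX)$ satisfies
\[
g_{0}\exp(sX) = g_{0}\exp(i\,\mathrm{Im}(s)X)\cdot \exp(\mathrm{Re}(s)X),
\]
so right $K$-invariance of $\tilde{\phi}$ yields
\[
\tilde{\phi}(\tau, g_{0}\exp(sX)) = \phi_{\mathrm{Re}(\tau)}(g_{0}\exp(i\,\mathrm{Im}(s)X)K),
\]
which depends on $s$ only through $\mathrm{Im}(s)$, and whose subharmonicity in $s$ is equivalent to convexity of $\phi_{\mathrm{Re}(\tau)}$ along the geodesic $t \mapsto g_{0}\exp(itX)K$ in $G/K$. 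Arbitrary complex tangent directions $Y \in \mathfrak{g} = \mathfrak{k} \oplus i\mathfrak{k}$ are handled via the Iwasawa / KAK decomposition. For the $\tau$-direction, dependence on $\mathrm{Re}(\tau)$ alone together with convexity of $\phi_{t}$ in $t$ gives subharmonicity, and the product Riemannian structure on $\R \times G/K$ controls the mixed directions.

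Next, the hypothesis $H^{0}(G)^{K} = \C$ of Theorem \ref{thm:prekopa type} is verified as follows: any right-$K$-invariant holomorphic function $f$ on the connected complex group $G$ is constant, because for any fixed $g_{0} \in G$ the holomorphic function $g \mapsto f(g_{0}g) - f(g_{0})$ vanishes on the totally real maximal-dimensional submanifold $K$ (noting $\dim_{\R}K = \dim_{\C}G$) and therefore vanishes identically by analytic continuation. Applying Theorem \ref{thm:prekopa type} in its $L^{2}$ formulation for the unbounded Stein manifold $G$ (or after exhausting $G$ by $K$-bi-invariant relatively compact Stein subdomains) gives subharmonicity in $\tau \in \C$ of $\tau \mapsto -\log \int_{G} e^{-\tilde{\phi}_{\tau}}\, \theta \wedge \bar{\theta}$. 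Since $\pi_{*}(\theta \wedge \bar\theta) = \mathrm{vol}(K) \cdot \mu_{G/K}$ and $\tilde{\phi}_{\tau}$ is right-$K$-invariant, Fubini gives
\[
\int_{G} e^{-\tilde{\phi}_{\tau}}\, \theta \wedge \bar\theta \;=\; \mathrm{vol}(K) \int_{G/K} e^{-\phi_{t}}\, \mu_{G/K}, \qquad t = \mathrm{Re}(\tau).
\]
The right-hand side depends only on $t$, so subharmonicity in $\tau \in \C$ is exactly convexity in $t \in \R$, giving the desired conclusion.

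The principal obstacle is the plurisubharmonicity step: establishing the equivalence between geodesic convexity on $G/K$ and plurisubharmonicity of the $K$-invariant pullback on $G$ along all complex tangent directions (not merely the ``vertical'' ones of the form $X \in \mathfrak{k}$) requires the full structure theory of symmetric spaces, including the Killing-form orthogonal decomposition $\mathfrak{g} = \mathfrak{k} \oplus i\mathfrak{k}$ and the Iwasawa / KAK decomposition. A secondary technical point is the non-compactness of $G$, handled either via the $L^{2}$ version of Theorem \ref{thm:prekopa type} (noting that constants lie in $L^{2}(e^{-\tilde{\phi}_{\tau}}\theta\wedge\bar\theta)$ precisely when $\int_{G/K}e^{-\phi_{t}}\mu_{G/K} < \infty$) or by a $K$-bi-invariant Stein exhaustion.
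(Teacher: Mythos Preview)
Your proof is correct and follows essentially the same route as the paper: pull back to the Stein manifold $G$ (resp.\ $\C^{*}\times G$ in the paper's packaging), invoke the classical equivalence between geodesic convexity on $G/K$ and plurisubharmonicity of the $K$-invariant lift, verify $H^{0}(G)^{K}=\C$ via the totally real submanifold $K\subset G$, and apply Theorem~\ref{thm:prekopa type}. The only difference is one of emphasis: where the paper blackboxes the convexity/psh correspondence by citing \cite{do}, you sketch it via the Cartan decomposition and flag the general-direction case (handled by Lassalle/Azad--Loeb structure theory) as the genuine technical input, and you address the non-compactness of $G$ more explicitly.
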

\begin{proof}
It is well-known that the convex functions on the symmetric space
$G/K$ may be written as $\pi_{*}\psi$ where $\psi$ is a $K-$invariant
psh function on $G$ (see for example Lemma 2, page 34 in \cite{do}).
In particular, we may identify $\phi_{t}$ with a $S^{1}\times K-$invariant
psh function $\psi_{t}$ on $\C^{*}\times G.$ Hence, the corollary
will follow from the previous theorem once we have checked that any
$K-$invariant holomorphic function on $G$ is constant. But this
follows immediately from the basic fact that a holomorphic function
on an $n-$dimensional connected complex manifold is uniquely determined
by its restriction to a totally real submanifold (here $K)$ of real
dimension $n.$\end{proof}
\begin{example}
If $G=SL(2,\C)$ and $K=SU(2),$ then the symmetric space $G/K$ may
be identified with the three-dimensional real hyperbolic space.
\end{example}
It is interesting to compare the previous corollary with the results
in \cite{c-m-s}, where a so called \emph{Prekopa-Leindler inequality}
is obtained valid for any (possibly non-symmetric) Riemannian manifold
$(M,g)$ with non-negative Ricci curvature. This inequality is in
fact stronger than the Prekopa inequality on $(M,g),$ i.e. the analog
of Corollary \ref{cor:prekopa symmet} for $(M,g).$ For example,
in the case when $(M,g)$ is Euclidean space (which may be obtain
as above by taking $G$ to be abelian) the Prekopa-Leindler inequality
implies the Brunn-Minkowski inequality for general sets, while the
Prekopa inequality a priori only implies the Brunn-Minkowski inequality
for\emph{ convex} sets. Conversely, as shown in \cite{c-m-s} the
validity of the Prekopa-Leindler inequality on $(M,g)$ actually implies
that the Ricci curvature of $(M,g)$ is non-negative and hence it
cannot hold on a general symmetric space as above, since it is well-known
that an irreducible symmetric space $G/K$ has\emph{ negative }sectional
curvatures when $G$ is non-abelian.

\subsection{\label{sub:The-volume-of}Proofs of Theorems \ref{thm:max vol vector field intro}
, \ref{thm:mean field intro}}

We will start by reducing the proof of Theorem \ref{thm:max vol vector field intro}
to proving Theorem \ref{thm:mean field intro}. In the following we
will use the notation $T=S^{1}$ for the one-dimensional real torus
inbedded in $\C^{*}$ and hence acting on $X.$ Since $X$ admits
a Kähler-Einstein metric it also admits a\emph{ $T-$invariant }Kähler-Einstein
metric $\omega$ \cite{b-m}, which is the curvature form of a $T-$invariant
metric $\left\Vert \cdot\right\Vert $ on $-K_{X}.$ To simplify the
notation we assume that $-K_{X}$ is globally generated (otherwise
just replace $-K_{X}$ with a large tensor power) and take $p$ to
be the $T-$fix point and $s$ the holomorphic section of $-K_{X},$
furnished by Lemma \ref{lem:exist of section}. As explained above
\[
\phi:=-\log\left\Vert s\right\Vert ^{2}
\]
is then a strictly psh smooth exhaustion function of the Stein manifold
$S:=\{s\neq0\}$ containing the fixed point $p.$ Moreover, $\phi$
is $T-$invariant. Indeed, by construction $g\cdot s=\chi(g)s$ where
$\chi$ is a character on the compact group $T.$ Hence $\chi$ takes
values in the unit-circle $S^{1}\subset\C,$ showing that $\phi$
is $T-$invariant, as desired. 

As explained above the Kähler-Einstein equation for $\omega$ (and
the correspond metric on $-K_{X})$ then translates to the Monge-Ampère
equation on $S$ 
\begin{equation}
(dd^{c}\phi)^{n}=Ce^{-\phi}dV\label{eq:k-e equ in c^n}
\end{equation}
 for a positive constant $C,$ where $dV$ is the volume form induced
by the trivialization $s.$ We may rewrite $C=V_{X}/\int_{S}e^{-\phi}dV$,
where 
\[
V_{X}=\int_{S}(dd^{c}\phi)^{n}
\]
which coincides with the top-intersection number $c_{1}(X)^{n}.$
Let $\Omega_{R}$ be the set where $\phi<R$ and note that the sets
$\Omega_{R}$ exhaust $\C^{n},$ since $\phi$ is proper. Assume now,
to get a contradiction, that the bound in the theorem to be proved
is not valid for $X.$ Then we can fix $R$ sufficiently large so
that 
\[
V_{\Omega_{R}}:=\int_{\Omega_{R}}(dd^{c}\phi)^{n}>(n+1)^{n}.
\]
 Writing $\Omega:=\Omega_{R}$ and replacing $\phi$ by $\phi-R$
we then obtain a $T-$invariant smooth plurisubharmonic function $\phi$
solving the following equation: 
\begin{equation}
(dd^{c}\phi)^{n}=V_{\Omega}\frac{e^{-\phi}dV}{\int_{\Omega}e^{-\phi}dV},\,\,\,\,\phi=0\,\,\mbox{on\,\ensuremath{\partial\Omega}}\label{eq:m-a eq}
\end{equation}
on the $T-$invariant domain $\Omega$ which is a\emph{ hyperconvex
}domain, i.e. it admits a negative continuous psh exhaustion function
(namely $\phi).$ More precisely, since $\phi$ is smooth and strictly
psh the domain $\Omega$ is a\emph{ Stein domain}. Finally, rescaling,
i.e. replacing $\phi$ with $V_{\Omega}^{1/n}\phi$ we have thus obtained
a solution to the equation in Theorem \ref{thm:mean field intro}
with a parameter $\gamma:=V_{\Omega}^{1/n}>(n+1)$ and all that remains
is thus establishing Theorem \ref{thm:mean field intro} in the slightly
more general case when $\C^{n}$ is replaced with a Stein domain with
a holomorphic $S^{1}-$action admitting an attractive fixed point.

\subsubsection*{Proof of Theorem \ref{thm:mean field intro}}

We will denote by $\mathcal{H}_{0}(\Omega)$ the space of all psh
functions on $\Omega$ which are continuous up to the boundary, where
they are assumed to vanish. Its $T-$invariant subspace will be denoted
by $\mathcal{H}_{0}(\Omega)^{T}.$ Let $\phi_{0}$ be a solution to
equation \ref{eq:mean field type eq intro} with a fixed parameter
$\gamma.$ The following Moser-Trudinger type inequality can then
be established on $\Omega$ (which has $\phi_{0}$ as an extremal):
there is a positive constant $C$ such that 
\begin{equation}
\frac{1}{\gamma}\log\int_{\Omega}e^{-\gamma\phi}dV\leq\frac{1}{(n+1)}\int_{\Omega}(-\phi)(dd^{c}\phi)^{n}+C\label{eq:m-t ineq in proof}
\end{equation}
 for any $\phi\in\mathcal{H}_{0}(\Omega)^{T}.$ We also write the
previous inequality as $\mathcal{G}_{\gamma}(\phi)\leq C$ for the
corresponding Moser-Trudinger type functional $\mathcal{G}_{\gamma}.$
Given the convexity property in Theorem \ref{thm:prekopa type} the
proof may be obtained by repeating the proof of Theorem 1.4 in \cite{b-b},
but for completeness we have recalled the proof in section \ref{sub:Proof-of-the}
below. 

The desired contradiction will now be obtained by exhibiting a function
violating the previous Moser-Trudinger type inequality. To this end
we first recall the definition of the pluricomplex Green function
$g_{p}$ of a pseudoconvex domain $\Omega$ with a pole at a given
point $p:$ 
\begin{equation}
g_{p}:=\sup\left\{ \phi:\,\,\,\phi\in(PSH)(\Omega)\cap\mathcal{C}^{0})(\overline{\Omega}-\{p\}):\,\,\phi\leq0,\,\,\,\phi\leq\log|z|^{2}+O(1)\right\} \label{eq:g as sup}
\end{equation}
 where $z$ denotes fixed local holomorphic coordinates centered at
$p.$ We will often write $g=g_{p}$ to simpify the notation. As is
well-known \cite{kl} $g$ is continuous up to the boundary on $\Omega$
apart from a singularity at $p$ and satisfies 
\begin{equation}
(dd^{c}g)^{n}=\delta_{p}\,\mbox{\,\ on\,}\Omega-\{p\},\,\,\,\, g=\log|z|^{2}+O(1)\label{eq:eq for g}
\end{equation}
In particular $\int(dd^{c}g)^{n}=1$ and $\int_{\Omega}e^{-ng}dV=\infty.$
We note that if $T$ acts, as above, on $\Omega$ and $p$ is taken
as a fixed point for the action, then $g$ is $T-$invariant. Indeed,
since $p$ is invariant under the action of $T$ so is the the convex
class of functions where the sup in \ref{eq:g as sup} is taken and
hence the sup $g$ must be $T-$invariant. The contradiction is now
obtained by showing that there is a family of functions $g_{t}$ in
$\mathcal{H}_{0}(\Omega)^{T}$ decreasing to $g$ such that, for $t$
sufficently large $g_{t}$ violates the Moser-Trudinger type inequality
\ref{eq:m-t ineq in proof} if $\gamma>(n+1).$ To this end we set
\[
g_{t}:=\log(e^{-2t}+e^{g})-C_{t},\,\,\,\,\phi_{t}=(e^{-2t}+Ce^{\log|z|^{2}})
\]
where $C$ is a constant such that $-g_{t}\leq-\phi_{t}$ and $C_{t}$
is the constant ensuring that $g_{t}$ vanishes on the boundary, i.e.
$C_{t}=\log(e^{-2t}+1)=o(1/t).$ In fact, this part of the argument
does not require any symmetry assumptions. We fix local holomorphic
coordinates $z$ centered at $p$ such that, locally around $p,$
$\phi(z)=\log|z|^{2}+O(1).$ Trivially we have, by replacing $\Omega$
with a coordinate ball $B,$ that 
\[
-\frac{1}{\gamma}\log\int_{\Omega}e^{-\gamma g_{t}}dV\leq-\frac{1}{\gamma}\log\int_{B}e^{-\gamma\phi_{t}}dz\wedge d\bar{z}+C'
\]
Denoting by $F_{t}$ the scaling map defined by $F_{t}(z)=e^{t}z$
we have $\phi_{t}=F_{t}^{*}\phi_{0}-2t+o(1/t).$ Hence, we may write
\[
-\frac{1}{\gamma}\log\int_{B}e^{-\gamma\phi_{t}}dz\wedge d\bar{z}=-2t+o(1/t)+-\frac{1}{\gamma}\log\int_{B}e^{-\gamma F_{t}^{*}\phi_{0}}dz\wedge d\bar{z}
\]
Rewriting $dz\wedge\bar{dz}$ as $e^{-2tn}F_{t}^{*}dz\wedge\bar{dz}$
gives 
\[
-\frac{1}{\gamma}\log\int_{B}e^{-\gamma F_{t}\phi_{0}}dz\wedge d\bar{z}=2tn\frac{1}{\gamma}-I_{t},\,\,\, I_{t}:=\frac{1}{\gamma}\log\int_{e^{t}B}e^{-\gamma\phi_{0}}dz\wedge d\bar{z}
\]
 Next, we consider the energy term: since $-g_{t}\leq-\log(e^{-2t}+0)-C_{t}=2t+o(1/t)$
we get 
\[
\int_{\Omega}(-g_{t})(dd^{c}g_{t})^{n}\leq(2t+o(1/t))\int_{\Omega}(dd^{c}g_{t})^{n}.
\]
 But since $g_{t}$ is a sequence of bounded psh functions tending
to zero at the boundary of $\Omega$ and decreasing to $g$ it follows
from well-known convergence properties that $\int_{\Omega}(dd^{c}g_{t})^{n}\rightarrow\int_{\Omega}(dd^{c}g)^{n}=1$
(in fact, in our case $g$ may be taken to be smooth close to the
boundary and then the convergence follows immediately from Stokes
theorem). All in all this means that 
\[
-\mathcal{G}(g_{t})\leq\frac{1}{(n+1)}2t+(2t)(-1+\frac{n}{\gamma})-\log I_{t}+o(1/t),
\]
 Note that when $\gamma=(n+1)$ we have $(-1+\frac{n}{\gamma})=\frac{1}{(n+1)}$
and hence when $\gamma>(n+1)$ first term above, which is linear in
$t,$ tends to $-\infty$ as $t\rightarrow\infty.$ Moreover, since
$I_{t}\geq-C$ it follows that $-\mathcal{G}(g_{t})\rightarrow-\infty$
when $t\rightarrow\infty$ and hence $\mathcal{G}$ is not bounded
from above, which contradicts the Moser-Trudinger inequality. This
completes the proof of Theorem \ref{thm:mean field intro} and thus
of Theorem \ref{thm:max vol vector field intro}, as well.
\begin{rem}
There is an alternative way of obtaining a contradiction to the Moser-Trudinger
inequaly with parameter $\gamma>(n+1)$ (see our previous preprint
\cite{b-b1b}). Indeed, as shown in \cite{b-b} the inequality induces
another inequality of Brezis-Merle-Demailly type on the $(n+1)-$dimensional
product $\Omega'=\Omega\times D$ of $\Omega$ with the unit-disc
which in particular implies that 
\begin{equation}
\int_{\Omega'}e^{-(n+1)\phi}dV<\infty,\label{eq:dem inequality}
\end{equation}
 for any $S^{1}-$invariant plurisubharmonic function on $\Omega',$
say with isolated singularities compactly contained in $\Omega',$
vanishing on the boundary and with unit Monge-Ampère mass on $\Omega'.$
But this is immediately seen to be contradicted by the pluricomplex
Green function of $\Omega'$ with a pole at the origin. 
\end{rem}

\subsubsection{\label{sub:Symmetry-breaking}Symmetry breaking}

The assumption in Theorem \ref{thm:mean field intro} that $0$ be
contained in $\Omega$ is crucial. For example, when $\Omega$ is
an annulus, $r<|z|<1,$ in $\C,$ it is well-known \cite{clmp} that
there exists a (uniquely determined) $S^{1}-$invariant solution $\phi_{\gamma}$
for any value of $\gamma.$ Moreover, by the method of moving planes\emph{
any }solution of the equations is necesseraly $S^{1}-$invariant \cite{clmp}
and thus coincides with $\phi_{\gamma}.$ In the range $\gamma<2$
the solution $\phi_{\gamma}$ is an extremal for the corresponding
Moser-Trudinger inequalities which are known to hold for general funtions
$\phi$ in $\mathcal{H}_{0}(\Omega).$ We note however that at the
critical value $\gamma=2$ an interesting instance of symmetry breaking
appears. Indeed, since Theorem \ref{thm:prekopa type} applies for
any $\gamma>0,$ we deduce, as before, that $\phi_{\gamma}$ is always
an extremal for the corresponding Moser-Trudinger type inequality
for $S^{1}-$invariant functions in $\mathcal{H}_{0}(\Omega).$ But
when $\gamma>2$ the corresponding Moser-Trudinger inequality does
not hold on \emph{all} of $\mathcal{H}_{0}(\Omega).$ Indeed, as before
it is violated by a suitable regularization of a Green function with
a pole in any given point in $\Omega.$ This also shows that Theorem
\ref{thm:prekopa type} cannot hold general if the invariance assumption
is removed.

For completeness we will next recall the proof in \cite{b-b} of the
inequality used above. The arguments carry over verbatim to the setting
of Stein domains, even if as explained above the $\C^{n}$- setting
is adequate for our purposes.

\subsection{\label{sub:Proof-of-the}Proof of the Moser-Trudinger type inequality
\ref{eq:m-t ineq in proof}}

Let 
\[
\mathcal{G}(\phi):=\frac{1}{\gamma}\log\int_{\Omega}e^{-\gamma\phi}dV+\frac{1}{(n+1)}\int_{\Omega}\phi(dd^{c}\phi)^{n},
\]
 whose Euler-Lagrange equation (i.e. the critical point equation $d\mathcal{G}_{|\phi}=0)$
is precisely the complex Monge-Ampère equation \ref{eq:m-a eq}. Given
$\phi_{0}$ and $\phi_{1}$ in $\mathcal{H}_{0}(\Omega)$ there is
a unique \emph{geodesic} $\phi_{t}$ connecting them in $\mathcal{H}_{0}(\Omega).$
It may be defined as the following envelope: setting $\Phi(z.t):=\phi_{t}(z),$
where now $t$ has been extended to a complex strip $\mathcal{T}$
by imposing invariance in the imaginary $t-$direction, we set $M:=\Omega\times\mathcal{T}$
the boundary data $\Phi_{\partial M}$ is determined by $\phi_{0}$
and $\phi_{1}$ and we set 
\begin{equation}
\Phi(z,t):=\sup\left\{ \Psi(z,t):\,\,\,\Psi\in\mathcal{C}^{0}(\bar{M})\cap PSH(M),\,\,\,\Psi_{\partial M}\leq\Phi_{\partial M}\right\} \label{eq:geod as env}
\end{equation}
Since $M$ is hyperconvex it follows that $\Phi\in\mathcal{C}^{0}(\bar{M})\cap PSH(M)$
is the unique solution of the following Dirichlet problem:

\[
(dd^{c}\Phi)^{n+1}=0,\,\,\,\mbox{in\,\ \ensuremath{M}}
\]
and on $\partial M$ the function $\Phi$ coincides with the boundary
data determined by $\phi_{i}$ (see \cite{b-b} and references therein).
We also note that if $\phi_{t}$ is $T-$invariant for $t=0,1$ then
it is in fact $T-$invariant for any $t,$ as follows from its definition
\ref{eq:geod as env} as an envelope (just as in the similar case
of the Green function discussed above).

By Theorem \ref{thm:prekopa type} the functional 
\[
t\mapsto\frac{1}{\gamma}\log\int_{\Omega}e^{-\gamma\phi_{t}}dV
\]
 is concave along any geodesic (indeed, the assumption \ref{eq:prekopa type}
follows immediately from Taylor expanding a holomorphic function on
$\Omega$ around the origin and using the positivity of the weights
of the action). Next, we note that
\[
\mathcal{E}(\mathbf{\phi}):=\int_{\Omega}\phi(dd^{c}\phi)^{n}
\]
 is affine along a geodesic $\phi_{t}.$ Indeed, letting $t$ be complex
a direct calculation gives 
\begin{equation}
dd^{c}\mathcal{E}(\phi_{t})=\int_{\Omega}(dd^{c}\phi)^{n+1}\label{eq:psh-forward formula}
\end{equation}
which, by definition, vanishes if $\phi_{t}$ is a geodesic. All in
all this means that $\mathcal{G}(\phi_{t})$ is concave along a geodesic.
Letting now $\phi$ be an arbitrary element in $\mathcal{H}_{0}(\Omega)^{T}$we
take $\phi_{t}$ to be the geodesic connecting the solution $\phi_{0}$
of equation \ref{eq:m-a eq} (obtained from the invariant Kähler-Einstein
metric on $X)$ and $\phi_{1}=\phi.$ Heuristically, $\mathcal{G}(\phi_{t})$
has a critical point at $t=0,$ i.e. its right derivative vanishes
for $t=0$ and hence by concavity $\mathcal{G}(\phi_{1})\leq\mathcal{G}(\phi_{0}).$
However, since $\phi_{t}$ is not, a priori, smooth one has to be
a bit careful when differentiating $\mathcal{G}(\phi_{t}).$ However,
by the affine concavity of $\mathcal{E}$ it is not hard too see that
\[
\frac{1}{(n+1)}\frac{d}{dt}_{t=0^{+}}\mathcal{E}(\phi_{t})\leq\int_{\mathcal{B}}\dot{\phi}_{0}(dd^{c}\phi_{0})^{n}/n!
\]
(see Lemma 3.4 in \cite{b-b}) and hence, by concavity, $\mathcal{G}(\phi_{1})\leq0+\mathcal{G}(\phi_{0}),$
which concludes the proof of the M-T inequality with $C=\mathcal{G}(\phi_{0}).$ 
\begin{rem}
\label{rem:weakly Fano}Theorem is still valid when $X$ is merely\emph{
weakly Fano, }i.e $-K_{X}$ is nef and big, if one uses the notion
of (singular) Kähler-Einstein metrics introduced in \cite{bbegz}.
Indeed, by \cite{bbegz} such a metric is smooth on a Zariski open
subset of $X$ and hence, by the Kähler-Einstein equation, strictly
positively curved there. Moreover, by the Kawamata-Shokurov basepoint
free theorem (\cite{ko-mo}, Theorem 3.3), $-mK_{X}$ is base point
free for $m$ sufficently large and hence Lemma \ref{lem:exist of section}
still applies and together with the Bialynicki- Birula decomposition
one gets an exhaustion by $T-$invariant Stein domains of a dense
open embedding of $\C^{n}$ in $X.$ The rest of the proof then proceeds
exactly as before.
\end{rem}

\subsection{Bounds on the volume in the absense of a Kähler-Einstein metric.}

Recall that for a general Fano manifold $X$ the \textquotedblleft{}greatest
lower bound on the Ricci curvature\textquotedblright{} is the invariant
$R(X)\in]0,1]$ defined as the sup of all positive numbers $r$ such
that $\mbox{Ric \ensuremath{\omega\geq r\omega}}$ (see \cite{sz}).
A simple modification of the proof of Theorem \ref{thm:max vol vector field intro}
then gives the following more general statement:
\begin{thm}
\label{thm:R(X)}Let $X$ be a Fano manifold admitting a holomorphic
$\C^{*}-$action with a finite number of fix points. Then the first
Chern class $c_{1}(X)$ satisfies the following upper bound 
\[
c_{1}(X)^{n}\leq\left(\frac{n+1}{R(X)}\right)^{n}
\]

\end{thm}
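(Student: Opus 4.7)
The plan is to follow the argument of Theorem~\ref{thm:max vol vector field intro} essentially verbatim, only substituting the genuine K\"ahler--Einstein equation by the twisted K\"ahler--Einstein equation arising in the continuity-method description of $R(X)$. Concretely, for any $r<R(X)$ I would, using the $T$-equivariance of the continuity method (cf.\ \cite{b-m}), produce a $T$-invariant K\"ahler form $\omega_{r}\in c_{1}(X)$ solving
\[
\mathrm{Ric}(\omega_{r})=r\omega_{r}+(1-r)\omega_{0},
\]
where $\omega_{0}$ is a fixed $T$-invariant K\"ahler form in $c_{1}(X)$ with local $T$-invariant weight $\psi_{0}$. Applying Lemma~\ref{lem:exist of section} and the attractive-fixed-point discussion of Section~\ref{sub:The-volume-of} to the metric on $-K_{X}$ induced by $\omega_{r}$ yields a $T$-invariant Stein open subset $S\subset X$ on which the strictly psh exhaustion $\phi:=-\log\|s\|^{2}$ satisfies the local Monge--Amp\`ere equation
\[
(dd^{c}\phi)^{n}=C\,e^{-r\phi}\,d\mu,\qquad d\mu:=e^{-(1-r)\psi_{0}}\,dV,
\]
where $dV$ is the volume element attached to the trivializing section $s$ and $d\mu$ is a smooth positive $T$-invariant volume form on $S$.

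Arguing by contradiction, I would suppose $c_{1}(X)^{n}>((n+1)/R(X))^{n}$ and pick $r<R(X)$ with $c_{1}(X)^{1/n}>(n+1)/r$. Exhausting $S$ by the sublevel sets $\Omega_{R}=\{\phi<R\}$ one may, as in Section~\ref{sub:The-volume-of}, choose $R$ so large that $V_{\Omega}^{1/n}>(n+1)/r$ for $\Omega:=\Omega_{R}$. Performing the normalization $\phi\mapsto\phi-R$ and the rescaling $\phi\mapsto V_{\Omega}^{-1/n}\phi$ exactly as before produces a $T$-symmetric smooth plurisubharmonic solution of
\[
(dd^{c}\phi)^{n}=\frac{e^{-\gamma\phi}\,d\mu}{\int_{\Omega}e^{-\gamma\phi}\,d\mu},\qquad \phi=0\ \text{on}\ \partial\Omega,
\]
with parameter $\gamma=rV_{\Omega}^{1/n}>n+1$. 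The natural variant of Theorem~\ref{thm:mean field intro} in which the Euclidean volume form is replaced by an arbitrary smooth positive $T$-invariant volume form then delivers a contradiction, and letting $r\nearrow R(X)$ gives the stated bound.

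The only non-routine point is to check that Theorem~\ref{thm:mean field intro} indeed tolerates this substitution of $dV$ by $d\mu$. This reduces to two observations. First, the Moser--Trudinger type inequality~\ref{eq:m-t ineq in proof} of Section~\ref{sub:Proof-of-the} is deduced from the Prekopa-type convexity Theorem~\ref{thm:prekopa type}, whose statement and proof are insensitive to the choice of smooth positive reference volume form. Second, the contradiction for $\gamma>n+1$ is extracted by testing the inequality against the regularized pluricomplex Green function $g_{t}$, where the decisive asymptotics are controlled by the leading-order Jacobian of the scaling map $F_{t}$ near the attractive fixed point $p$, and the density $d\mu/dV$, being smooth and bounded in a neighbourhood of $p$, is absorbed into the $o(1/t)$ corrections. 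Consequently the entire argument of Section~\ref{sub:The-volume-of} carries over without modification beyond the bookkeeping change $\gamma=rV_{\Omega}^{1/n}$, which is exactly what produces the factor $1/R(X)^{n}$ in the conclusion.
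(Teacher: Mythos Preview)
Your proposal is correct and follows essentially the same route as the paper: fix $r<R(X)$, use the $S^{1}$-invariant twisted K\"ahler--Einstein metric $\mathrm{Ric}\,\omega_{r}=r\omega_{r}+(1-r)\eta$, localize via Lemma~\ref{lem:exist of section} to obtain a solution of the mean-field equation with parameter $\gamma=rV_{\Omega}^{1/n}$ against the modified volume form $e^{-(1-r)\phi_{0}}dV$, and then observe that both the Prekopa-type convexity and the Green-function blow-up argument are insensitive to this bounded smooth twist. The only cosmetic difference is that the paper obtains the $S^{1}$-invariance of $\omega_{r}$ from the elementary uniqueness of solutions to Aubin's continuity equation rather than from \cite{b-m}, and phrases the Moser--Trudinger step as applying Theorem~\ref{thm:prekopa type} to the curve $r\phi_{t}+(1-r)\phi_{0}$.
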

The point is that, as shown in \cite{sz}, the invariant $R(X)$ coincides
with the sup of all $r\in[0,1[$ such that, for any given Kähler form
$\eta$ there exists $\omega_{r}$ such that 
\[
\mbox{Ric \ensuremath{\omega_{r}=r\omega_{r}+(1-r)\eta}}
\]
As is well-known $\omega_{r}$ is uniquely determined. In particular,
if $X$ admits a holomorphic action by $S^{1}$ then, by averaging
over $S^{1},$ we may take $\eta$ to be $S^{1}-$invariant. By the
uniqueness of solutions to the previous equation it then follows that
$\omega_{r}$ is also $S^{1}-$invariant. For any fixed $r$ we can
now repeat the proof of Theorem \ref{thm:max vol vector field intro}
and obtain an $S^{1}-$invariant psh solution $\phi$ to 
\begin{equation}
(dd^{c}\phi)^{n}=V_{\Omega}\frac{e^{-r\phi}e^{-(1-r)\phi_{0}}dV}{\int_{\Omega}e^{-r\phi}e^{-(1-r)\phi_{0}}dV},\,\,\,\,\phi=0\,\,\mbox{on\,\ensuremath{\partial\Omega}},\label{eq:m-a eq-with t}
\end{equation}
 where $\eta=dd^{c}\phi_{0}$ on $\Omega.$ The Moser-Trudinger inequality
still applies when $dV$ is replaced with $e^{-(1-r)\phi_{0}}dV.$
Indeed, we just apply Theorem \ref{thm:prekopa type} to the curve
$(1-r)\phi_{t}+r\phi_{0},$ where $\phi_{t}$ is a geodesic as before.
In fact, since $\phi_{0}$ is bounded this gives the same Moser-Trudinger
inequality as before, but with the integrand $e^{-\phi}$ replaced
with $e^{-r\phi}.$ Hence, rescaling we obtain a contradiction if
$r^{n}V(X)>(n+1)^{n},$ just as before. In particular, taking the
sup over all $r<R(X)$ then concludes the proof of the previous theorem.

\section{\label{sec:The-volume-of-singular-toric}The volume of (singular)
Toric varieties and convex bodies}

We will next briefly explain how to carry out the proof of Theorem
\ref{thm:max vol intro toric sing} possibly \emph{singular} toric
Fano varieties directly using convex analysis in $\R^{n}.$ This approach
has the advantage of bypassing some technical difficulties related
to the singularities of the toric variety in question. The motivation
comes from the well-known correspondence between $T-$invariant positively
curved metrics on toric line bundles $L\rightarrow X_{P}$ and convex
functions in $\R^{n}$ whose (sub-gradient) image is contained in
the corresponding polytope $P$ (see \cite{do,b-b-2} and references
therein) More precisely, a psh function $\Phi(z)$ on the complex
torus $\C^{*n},$ embedded in $X,$ is the weight of a, possibly singular,
positively curved metric on $L\rightarrow X_{P}$ iff $\Phi$ is the
pull-back under the Log map 
\[
\mbox{Log }\C^{*n}\rightarrow\R^{n}:\,\,\,\, z\mapsto x:=(\log(|z_{1}|^{2},...,\log(|z_{n}|^{2}),
\]
of a convex function $\phi(x)$ on $\R^{n}$ such that the (sub-)
gradient image $d\phi(\R^{n})$ is contained in $P.$ This correspondence
has been mostly studied in the case when $X_{P}$ is smooth and the
metric on $L$ is smooth and positively curved. Then the corresponding
convex function has the property that the differential (gradient)
$d\phi$ defines a diffeomorphism from $\R^{n}$ to the interior of
$P$ and moreover its Legendre transform satisfies Guillemin's boundary
conditions (see \cite{do} and references therein). However, we stress
that these refined regularity properties will play no role here. Our
normalizations are such that 
\[
\mbox{(Log)}_{*}MA(\Phi)=MA_{\R}(\phi),
\]
 where $MA_{\R}(\phi)$ denotes $n!$ times the usual real Monge-Ampère
measure of the convex function $\phi,$ i.e. $MA_{\R}\phi)(E):=n!\int_{d\phi(E)}dp,$
for any Borel measure $E.$ 

We also point out that in the case when $X_{P}$ is\emph{ smooth }one
may assume that $P$ is contained in the positive octant with a vertex
at $0$ and the psh function $\Phi$ can then by taken to be defined
on all of $\C^{n},$ as in the proof of Theorem \ref{thm:max vol vector field intro}.
The technical difficulity in the general case when $X_{P}$ may be
singular - when seen from the complex point of view - is that, even
if we may still arrange that $P$ is in the positive octant with a
vertex at $0,$ the function $\phi$ will a priori only be continuous
on $\C^{*n},$ i.e. away from the coordinate axes. 

Anyway, from the\emph{ real} point of view the latter theorem can
be rephrased entirely in terms of convex analysis on convex domains
in $\R^{n}$ of the form 
\[
\Omega:=\{\psi<0\}
\]
 where $\psi$ is a convex function on $\R^{n}$ such that its (sub-)
gradient image of a convex body $P$ in the positive octant. We propose
to call such a domain \emph{monotone. }Note that, since $\partial\psi/\partial x_{j}\geq0$
any monotone convex domain $\Omega$ is invariant under the action
of the additive semi-group $]-\infty,0]^{n}$ by translations. In
particular, $\Omega$ is \emph{unbounded} in contrast to the standard
setting of bounded convex domains in convex analysis. Still, it is
not hard to generalize the usual properties valid for convex functions
on bounded domains, as long as one works with \emph{bounded} convex
functions on $\Omega.$ More precisely, a convenient function space
to work with is the space $\mathcal{H}(\Omega,\psi)$ of all bounded
convex functions $\phi$ on $\Omega$ such that $\phi\geq\delta\psi$
for some positive number $\delta$ (depending on $\phi).$ For example,
it is not hard to see that for any such $\phi$ we have $\phi=0$
on $\partial\Omega$ and the total real Monge-Ampère mass of $\phi$
on $\Omega$ is finite. Now all the usual notions concerning psh functions
on bounded pseudoconvex domains can be transported to the setting
of monotone convex domains. For example, we can define the \emph{Green
function }$g$ of $\Omega$ (playing the role of the usual pluricomplex
Green function for a domain in $\C^{n}$ with a pole at the origin)
by 
\begin{equation}
g(x):=\sup\left\{ \phi(x):\,\,\,\phi\in\mathcal{H}(\Omega,\psi):\,\,\phi(x)\leq\log(\sum_{i=1}^{n}e^{x_{i}})+O(1)\right\} \label{eq:def of convex green f}
\end{equation}
Then standard arguments show that 
\begin{itemize}
\item $g_{\Omega}$ is convex and continuous on $\bar{\Omega}$ and $g_{\Omega}=0$
on $\partial\Omega$ 
\item $g(x)=\log(\sum_{i=1}^{n}e^{x_{i}})+O(1)$
\item $MA_{\R}(g)=0$ and if $g_{t}:=\log(e^{-2t}+e^{g_{\Omega}})$ then
$g_{t}\in\mathcal{H}(\Omega,\psi)$ with $\lim_{t\rightarrow\infty}\int_{\Omega}MA_{\R}(g_{t})=1$
and 
\begin{equation}
\lim_{t\rightarrow\infty}\int_{\Omega}e^{-ng_{t}}d\nu_{n}=\infty,\label{eq:non-integr of green}
\end{equation}

\end{itemize}
where 
\begin{equation}
d\nu_{n}(x)=e^{\sum_{i=1}^{n}x_{i}}dx,\label{eq:push-forw of lesb}
\end{equation}
 i.e. $d\nu_{n}$ is a multiple of the push-forward under the Log
map of the Lebesgue measure on $\C^{n}.$ 

Moreover, if $\Phi$ is a $T^{n}-$invariant psh function in $\C^{n}$
satisfying the\emph{ Kähler-Einstien equation} \ref{eq:k-e equ in c^n}
then its push-forward $\psi$ satisfies the following real Monge-Ampère
equation 
\begin{equation}
MA_{\R}(\psi)=e^{-\psi(x)}d\nu_{n}(x),\label{eq:real k-e equ with lesb}
\end{equation}
The following theorem, proved in \cite{b-b-2}, shows that the previous
equation admits a solution iff $(1,...1)$ is the barycenter of $P:$
\begin{thm}
\label{thm:-existence of real k-e}\cite{b-b-2} Let $P$ be a convex
body and fix an element $p_{0}$ in $P.$ Then there is a smooth convex
function $\phi$ such that $d\phi$ defines a diffeomorphism from
$\R^{n}$ to the interior of $P$ and such that $\phi$ solves the
equation 
\[
MA_{\R}(\phi)=e^{-\phi(x)+\left\langle p_{0},x\right\rangle }dx
\]
 on $\R^{n}$ iff $p_{0}$ is the barycenter of $P.$ 
\end{thm}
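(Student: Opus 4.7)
The plan is to handle the two implications separately, using throughout the change-of-variable identity $(d\phi)_* MA_\R(\phi) = n!\,\mathbf{1}_{\mathrm{int}(P)}\, dy$, which follows from $d\phi : \R^n \to \mathrm{int}(P)$ being a diffeomorphism together with the definition of the real Monge--Amp\`ere measure.

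\textbf{Necessity.} Assume the equation admits such a $\phi$. Integrating against $1$ gives
\[
\int_{\R^n} e^{-\phi + \langle p_0, x\rangle}\, dx = \int_{\R^n} MA_\R(\phi) = n!\,\mathrm{Vol}(P).
\]
In particular, finiteness of the left-hand side forces $p_0 \in \mathrm{int}(P)$, since $\phi - \langle p_0, \cdot\rangle$ grows at infinity like the support function $h_{P - p_0}$, and integrable decay requires $0 \in \mathrm{int}(P - p_0)$. With $p_0$ in the interior, the exponential decay of the density justifies integration by parts: from $\nabla(e^{-\phi + \langle p_0, x\rangle}) = (p_0 - d\phi)\, e^{-\phi + \langle p_0, x\rangle}$ one obtains
\[
p_0 \int_{\R^n} e^{-\phi + \langle p_0, x\rangle}\, dx = \int_{\R^n} d\phi \cdot e^{-\phi + \langle p_0, x\rangle}\, dx = \int_{\R^n} d\phi \cdot MA_\R(\phi) = n! \int_P y\, dy,
\]
the last step by change of variables. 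Dividing by $n!\,\mathrm{Vol}(P)$ identifies $p_0$ with the barycenter of $P$.

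\textbf{Sufficiency.} Assume $p_0 = \mathrm{bary}(P)$. The substitution $\phi = \tilde\phi + \langle p_0, \cdot\rangle$ reduces the problem, after replacing $P$ by $P - p_0$, to producing a smooth strictly convex $\tilde\phi$ with $d\tilde\phi : \R^n \to \mathrm{int}(P - p_0)$ solving $n!\det D^2\tilde\phi = e^{-\tilde\phi}$ under the hypothesis $0 = \mathrm{bary}(P - p_0)$. I would produce such a $\tilde\phi$ variationally by minimizing the Ding-type functional
\[
F(\tilde\phi) := -\log \int_{\R^n} e^{-\tilde\phi}\, dx + \frac{1}{(n+1)!\,\mathrm{Vol}(P-p_0)}\int_{\R^n} \tilde\phi \, MA_\R(\tilde\phi)
\]
on the class $\mathcal{C}$ of convex functions whose subgradient image lies in $P - p_0$. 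A direct variational computation shows that its Euler--Lagrange equation is (a normalization of) the target equation; Caffarelli's interior regularity for the real Monge--Amp\`ere operator then upgrades a bounded minimizer to a smooth solution, and the subgradient constraint forces $d\tilde\phi$ to surject onto $\mathrm{int}(P - p_0)$.

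The main obstacle is establishing coercivity of $F$ on $\mathcal{C}$ modulo the $\R^n$-action by translations $\tilde\phi \mapsto \tilde\phi - \langle v, \cdot\rangle$. A computation analogous to the one in the necessity direction shows that the energy term shifts precisely by a multiple of $\langle v, \mathrm{bary}(P - p_0)\rangle$, so the barycenter hypothesis is exactly what renders the energy translation-invariant; meanwhile, the log term is concave in $v$ by convex duality, giving a unique optimal translate modulo which $F$ can be controlled from below by a real Moser--Trudinger estimate obtained along geodesics of convex functions, in the spirit of the argument of Section \ref{sub:Proof-of-the} but using the classical Prekopa inequality in place of Theorem \ref{thm:prekopa type}. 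Combined with lower semicontinuity under monotone limits of convex functions, coercivity produces a minimizer. Without the barycenter assumption, explicit translation families would drive $F$ to $-\infty$; the hypothesis $p_0 = \mathrm{bary}(P)$ is precisely what neutralizes this obstruction.
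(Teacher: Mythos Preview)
This theorem is not proved in the present paper; it is imported from the companion work \cite{b-b-2}, so there is no proof here to compare against. Your necessity argument is correct and standard.

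For sufficiency, the variational strategy you outline (minimize a Ding-type functional, barycenter hypothesis yields coercivity, Caffarelli regularity upgrades the minimizer) is indeed the approach of \cite{b-b-2} and, in the smooth case, of Wang--Zhu \cite{w-z}. But two points in your execution are off. First, the action $\tilde\phi \mapsto \tilde\phi - \langle v,\cdot\rangle$ does not preserve $\mathcal{C}$, since it shifts the gradient image by $-v$; the genuine noncompact direction is argument translation $\tilde\phi(x)\mapsto\tilde\phi(x+a)$. Second, under argument translation your energy term $\int\tilde\phi\,MA_\R(\tilde\phi)$ is \emph{invariant} (just change variables), as is the log term, so with your functional the barycenter never enters and you cannot distinguish the case $\mathrm{bary}(P-p_0)=0$ from the case where it is nonzero. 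The correct energy is the Aubin--Mabuchi primitive $\mathcal{E}$ of $MA_\R$: on a bounded domain with zero boundary data it equals $\frac{1}{n+1}\int\phi\,MA_\R(\phi)$, but on all of $\R^n$ it does not, because the integration-by-parts boundary terms at infinity do not vanish. In the Legendre-dual picture one has $\mathcal{E}(\tilde\phi) = -\frac{1}{\mathrm{Vol}(P-p_0)}\int_{P-p_0}\tilde\phi^*\,dy$, and since argument translation sends $\tilde\phi^*\mapsto\tilde\phi^* - \langle a,\cdot\rangle$, the energy shifts by exactly $\langle a,\mathrm{bary}(P-p_0)\rangle$. This is the mechanism by which the barycenter hypothesis makes the Ding functional translation-invariant and hence coercive modulo translations; without it, translation drives the functional to $-\infty$. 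With this correction in place, the remainder of your sketch (Prekopa convexity along affine paths, compactness in the Legendre picture, Caffarelli interior regularity) is essentially the argument of \cite{b-b-2}.
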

One can now go on to obtain Moser-Trudinger type inequalities (and
Brezis-Merle-Demailly type inequalities) essentially as before, using
Prekopa's convexity theorem in $\R^{n}.$ The only technical difficulty
is to make sure that the corresponding energy type functional 
\[
\mathcal{E}_{\R}(\phi):=\frac{1}{(n+1)}\int_{\Omega}\phi MA_{\R}(\phi)
\]
still has the appropriate properties (for its first and second derivatives
along geodesics). To this end one can first establish that if $M(\phi_{1},...,\phi_{n})$
denotes the real mixed Monge-Ampère measure obtained by polarizing
the usual real Monge-Ampère measure, then the pairing 
\[
(\phi_{0},\phi_{1},...,\phi_{n})\mapsto\int_{\Omega}\phi_{0}M(\phi_{1},...,\phi_{n}),
\]
is finite and symmetric on $\mathcal{H}(\Omega,\psi)^{n+1}.$ The
new technical difficulty compared to the classical situation (see
for example \cite{kla} and references therein) comes from the unboundedness
of $\Omega.$ But using that $\Omega$ is \emph{complete} ``at infinity''
in $\Omega$ (wrt the Euclidean metric) one can use standard cut-off
function argument to carry out the required integrations by parts.
Concretely, given a smooth compactly supported function $f$ on $\R$
one can use the sequence $\chi_{j}(x):=f(|x|/j)^{2}$ as cut-off functions
on $\Omega$ (this is sometimes referred to as ``the Gaffney trick'').

The advantage of the $\R^{n}-$approach is that it applies equally
well to the case when the corresponding toric variety $X_{P}$ is
singular. In fact, one may as well replace $P$ with any convex body,
even though there is then no corresponding toric variety. Mimicking
the proof of Theorem \ref{thm:max vol vector field intro}, replacing
$\phi$ with a solution of the equation in Theorem \ref{thm:-existence of real k-e}
one then obtains a proof of Theorem \ref{thm:max vol for convex body intro}
essentially as above.

Next, we explain how to deduce Cor \ref{cor:ehrhart intro} from Theorem
\ref{thm:max vol for convex body intro} Given a real polytope $P$
with non-empty interior we can write it as 
\[
P=\{p\in\R^{n}:\,\,\alpha_{F}(p)\geq0\},
\]
where $F$ is an index running over the facets $\{\alpha_{F}=0\}$
of $P.$ We fix a vertex $v$ and affine functions $\alpha_{F_{1}},....,\alpha_{F_{N}}$
cutting out $n$ faces of $P$ meeting $v$ and spanning a cone of
maximal dimension. Then 
\[
P'=\alpha(P),\,\,\,\,\,\,\alpha(p):=(\alpha_{F_{1}}(p)/\alpha_{F_{1}}(b_{P}),...,\alpha_{F_{n}}(p)/\alpha_{F_{n}}(b_{P}))
\]
is an $n-$dimensional polytope in the positive octant $[0,\infty[^{n}$
such that $b_{P'}=(1,1,...,1).$ Moreover, if $P$ is a rational polytope
as in the statement of Cor \ref{cor:ehrhart intro}, then 
\[
\mbox{Vol}(P')=\frac{d}{a_{F_{1}}\cdots a_{F_{n}}}\mbox{Vol}(P)
\]
where $d$ is the determinant of the linear map $p\mapsto(l_{F_{1}}(p),...,l_{F_{n}}(p)).$
But the map is represented by an invertible matrix with integer coefficients
and hence $d$ is a positive integer and in particular $d\geq1.$
Since, by assumption, $a_{F_{i}}\leq1$ it follows that $\mbox{Vol}(P')\geq\mbox{Vol}(P)$
and hence it will be enough to prove Theorem \ref{thm:max vol for convex body intro}.
\begin{rem}
\label{rem:grun}As kindly pointed out to us by Bo'az Klartag Theorem
\ref{thm:max vol for convex body intro} can also be deduced from
\emph{Grunbaum's inequality.} We thank him for allowing us to reproduce
his elegant argument here. First recall that the Grunbaum inequality
says that if $P$ is a convex body, and $P_{-}$ denotes the intersection
of $P$ with an affine half-space defined by one side of a hyperplane
$H$ passing through the barycenter of $P,$ then 
\[
\mbox{Vol}(P_{-})\geq\left(\frac{n}{(n+1)}\right)^{n}\mbox{Vol}(P).
\]
 In particular, if $P$ is a convex body as in the statement of Theorem
\ref{thm:max vol for convex body intro} we can take $P_{-}$ to be
$n$ times the unit-simplex $\Delta$ in the positive octant with
one vertex at $0.$ Since, $\mbox{Vol}(P_{-})\leq\mbox{Vol}(n\Delta)=n^{n}/n!$
this gives the desired inequality.
\end{rem}

\section{\label{sec:Symplectic-geometry,-multiplicity}Symplectic geometry
and multiplicity free actions }

\subsection{\label{sub:The-homogeneous-case}The homogeneous case}

Let us start by specializing Theorem \ref{thm:max vol vector field intro}
to the case when $X$ is a rational homogenuous space. Even though
this is the simples case the corresponding degree bound, when rephrased
in terms of representation theory, is highly non-trival and was first
obtained by Snow \cite{sn}. 

Let us first recall some basic representation theory (see \cite{sn}
and references therein). Let $K$ be a compact complex semi-simple
Lie group and denote by $G$ its complexification. Under the adjoint
action of a fixed maximal torus $T$ the Lie algebra $L(G)$ decomposes
as 
\[
L(G)=L(T_{c})\oplus E_{+}\oplus\overline{E}_{+},\,\,\, E_{+}=\bigoplus_{\alpha\in R^{+}}E_{\alpha},
\]
 where $R_{+}$ is a consistent choice of \emph{positive roots }$\alpha\in L(T)^{*}$
and $E_{\alpha}$ denote the corresponding weight spaces, i.e. $E_{\alpha}$
is generated by a vector $Z_{\alpha}$ such that $[t,Z_{\alpha}]=i\left\langle t,\alpha\right\rangle Z_{\alpha}$
for any $t\in L(T).$ A consistent choice of positive roots $R_{+}$
corresponds to a choice of\emph{ Borel group} $B$ with Lie algebra
\[
L(B)=L(T)\oplus\overline{E}_{+}
\]
The corresponding (complete) \emph{flag variety} is defined as the
$G-$homogenuous compact complex manifold $G/B(=K/T).$ As is well-known
any rational $G-$homogeneous compact complex manifold $X$ may be
written as 
\[
X=G/\mathcal{P},
\]
 for some $G$ as above and a \emph{parabolic} subgroup $\mathcal{P}$
of $G$ (i.e. a subgroup containing a Borel group which we may assume
is $B).$ We recall that any $G-$homogeneous line bundle $L\rightarrow G/\mathcal{P}$
is determined by a \emph{weight} $\lambda,$ i.e. an element in the
weight lattice of $L(T_{c})^{*}.$ Indeed, 
\[
L_{\lambda}=G\times_{(P,\rho_{\lambda})}\C,
\]
where $\rho_{\lambda}$ is a homomorphism $P\rightarrow\C^{*},$ which
is uniquely detetermined by its restriction to the complex torus in
$P$ and hence determined by an element $\lambda$ in the weight lattice
of $L(T_{c})^{*}.$ By construction, the tangent bundle $TX$ is generated
at the identity coset by root vectors $Z_{\alpha}\in E_{\alpha}$
for $\alpha$ in a subset $R_{X}^{+}$ of the positive roots. In particular,
the weight $\lambda_{X}$ of the anti-canonical line bundle $-K_{X}$
may be written as 
\begin{equation}
\lambda_{X}=\sum_{\alpha\in R_{X}^{+}}\alpha,\label{eq:weight of canoni}
\end{equation}
defining an ample line bundle, so that $X$ is Fano. Moreover, by
the general Weyl character formula, 
\[
c_{1}(L_{\lambda})^{n}=n!\prod_{\alpha\in R_{X}^{+}}\frac{\left\langle \alpha,\lambda\right\rangle }{\left\langle \alpha,\rho\right\rangle },
\]
with $\rho$ denoting, as usual, the half-sum of all the positive
roots $\alpha$ and where $\left\langle \cdot,\cdot\right\rangle $
denotes the Killing form. Hence, Theorem \ref{thm:max vol vector field intro}
applied to $G/\mathcal{P}$ translates to a Lie algebra statement
first shown by Snow \cite{sn}. Indeed, as shown in \cite{sn} (Prop
1) formula \ref{eq:weight of canoni} can be simplified using the
Dynkin diagram of $G.$ Using this latter formula and the Weyl character
formula above together with classification theory for semi-simple
Lie groups, Snow then shows, using quite elaborate calculations, how
to deduce the bound in Theorem \ref{thm:max vol vector field intro}
for the Fano manifold $X=G/\mathcal{P}.$ It should also be pointed
out that the results in \cite{sn} also give that $\P^{n}$ is the
\emph{unique} maximizer of the degree among all partial flag manifolds.

\subsection{The case of spherical varities and multiplicity free actions}

In this section we will reformulate Theorem \ref{thm:max vol vector field intro}
in the case when $X$ is spherical variety, using Brion's formula
for the volume \cite{bri-1} of a line bundle on $X.$ We will use
the symplecto-geometric formulation (see the end of \cite{bri-1}
and \cite{br-0} where further references can be found).

Let us start by recalling the definition of the \emph{Duistermaat-Heckman
measure} in symplectic geometry. Let $(X,\omega)$ be a symplectic
manifold and $K$ a compact connected Lie group acting by symplectomorphisms
on $X$ and fix a compact maximal torus $T$ in $K.$ Assume for simplicity
that the first Betti number of $X$ vanishes (which will be the case
here since $X$ will be a Fano manifold). Then there is a\emph{ moment
map} 
\[
\mu_{T}:\,\, X\rightarrow L(T)^{*},
\]
 where $L(T)$ denotes the real vector space given by the Lie algebra
of $T.$ The image $P:=\mu_{T}(X)$ is a convex rational polytope
and the density $v(p)$ of the\emph{ Duistermaat-Heckman measure }$(\mu_{T})_{*}\omega^{n}/n!$
on $\mu_{T}(X)$ is continuous and piecewise polynomial. The convex
polytope obtained by intersecting $\mu_{T}(X)$ with a fixed positive
Weyl chamber $\Lambda_{+}$ in $L(T)^{*}$ is called the\emph{ moment
polytope.}

The action of $K$ is said to be \emph{multiplicity-free }if the group
of symplectomorphisms of $(X,\omega)$ which commute with $K$ is
abelian ( equivalently, all $K-$invariant functions on $X$ Poisson
commute). As is well-known \cite{br-0} any $G-$spherical non-singular
complex algebraic variety $X$ with a $G-$equivariant ample line
bundle $L\rightarrow X$ equipped with a fixed positive curvature
form $\omega$ in $c_{1}(L)$ corresponds to a symplectic manifold
$(X,\omega)$ with a symplectic action by the real form $K$ of $G$
which is multiplicity-free\emph{.} Moreover, as shown by Brion \cite{bri-1},
in the spherical (i.e. multiplicity free) case the density $v$ of
the Duistermaat-Heckman measure is explicitly given by 
\[
v(p)=\prod_{\alpha}\frac{\left\langle \alpha,p\right\rangle }{\left\langle \alpha,\rho\right\rangle },
\]
where $\rho$ denotes the half-sum of all the positive roots $\alpha$
and the products runs over all positive roots $\alpha$ such that
$\left\langle \alpha,p\right\rangle >0.$ Moreover, the Lesbesgue
measure $dp$ has been normalized to give unit-volume to the fundamental
domain of the lattice in $P$ (see section \ref{sub:The-homogeneous-case}
). By the definition of $v$ as the density of the Duistermaat-Heckman
measure 
\[
\int_{X}\omega^{n}/n!=\int_{P}v(p)dp
\]
Hence, applying Theorem \ref{thm:max vol vector field intro} to a
spherical non-singular Fano variety gives the following
\begin{cor}
Let $(X,\omega)$ be symplectic manifold with a symplectic action
by a compact Lie group $K$ which is multiplicity-free and denote
by $P$ the image of the moment map associated to a fixed maximal
torus $T$ in $K.$ If $X$ admits an integrable $\omega-$compatible
complex structure $J$ preserved by $K$ and such that the cohomology
class $[\omega]$ contains a Kähler-Einstein metric on $(X,J)$ then
\[
\int_{P}v(p)dp\leq(n+1)^{n}/n!
\]
 Equality hold when $(X,\omega)$ is complex projective space equipped
and $\omega$ is the standard suitably normalized $SU(n)-$invariant
symplectic form (i.e. $\omega$ is $(n+1)$ times the Fubini-Study
form).
\end{cor}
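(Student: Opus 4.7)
The plan is to recognize this corollary as a direct specialization of Theorem~\ref{thm:max vol vector field intro}, with Brion's formula translating the symplectic integral $\int_P v(p)\,dp$ into the top intersection number $c_1(X)^n/n!$. First I would pass from the symplectic setup to the algebraic one. Since the integrable complex structure $J$ is $K$-invariant and $\omega$-compatible, the Hamiltonian $K$-action on $(X,\omega)$ extends to a holomorphic action of the complexification $G := K_\C$ by biholomorphisms of $(X,J)$. By the standard dictionary between multiplicity-free Hamiltonian actions and spherical varieties (recalled in the paragraph preceding the corollary), the multiplicity-freeness hypothesis is equivalent to $X$ being a spherical $G$-variety. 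The existence of a K\"ahler-Einstein representative $\omega_{KE}\in[\omega]$, normalized so that $\mathrm{Ric}\,\omega_{KE}=\omega_{KE}$, forces $[\omega]=[\omega_{KE}]=c_1(-K_X)$; in particular $(X,J)$ is Fano and $\int_X\omega^n/n! = c_1(X)^n/n!$.

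Second, I would verify the remaining hypothesis of Theorem~\ref{thm:max vol vector field intro}: existence of a holomorphic $\C^*$-action with only finitely many fixed points. Spherical varieties have only finitely many $G$-orbits, so this is exactly the situation of Remark~\ref{rem:finite} in the paper. Concretely, a generic one-parameter subgroup of a maximal algebraic torus $T_\C$ contained in a Borel $B\subset G$ with open dense orbit provides such an action, since its fixed point set coincides with $X^{T_\C}$, which is finite in the spherical setting. With both hypotheses verified, Theorem~\ref{thm:max vol vector field intro} applies and yields $c_1(X)^n\leq(n+1)^n$.

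Finally, Brion's formula, stated just above the corollary in the form $\int_X\omega^n/n! = \int_P v(p)\,dp$, immediately translates this bound into
\[
\int_P v(p)\,dp \;=\; \frac{c_1(X)^n}{n!} \;\leq\; \frac{(n+1)^n}{n!}.
\]
For the equality case, one observes that on $\P^n$ equipped with the K\"ahler-Einstein Fubini-Study metric (scaled so that $[\omega]=c_1(-K_{\P^n})$) one has $c_1(-K_{\P^n})^n=(n+1)^n$, while the $SU(n+1)$-action is standardly multiplicity-free. The only substantive point in the whole argument is the translation, through the cited Remark~\ref{rem:finite}, from symplectic multiplicity-freeness to the finite-fixed-point condition of Theorem~\ref{thm:max vol vector field intro}; this is where the standard theory of spherical varieties does the work, and I expect it to be the main (though still standard) obstacle to formalize crisply.
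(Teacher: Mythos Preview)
Your proposal is correct and follows essentially the same approach as the paper: the corollary is deduced by translating the multiplicity-free symplectic setup into the spherical Fano setting, invoking Remark~\ref{rem:finite} to obtain the required $\C^{*}$-action with finitely many fixed points, applying Theorem~\ref{thm:max vol vector field intro}, and using Brion's formula $\int_X\omega^n/n!=\int_P v(p)\,dp$ to rewrite the bound. The paper's own argument is in fact just the single sentence preceding the corollary together with the preparatory discussion of Brion's formula and Remark~\ref{rem:finite}, so your write-up is simply a more explicit unpacking of the same logic.
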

We recall that the condition that a spherical variety $X$ be Fano
can be expressed rather explicitly in algebro-geometric terms \cite{bri-2}.
\begin{rem}
\label{rem:finite}It is well-known that any spherical variety $X$
has a holomorphic $\C^{*}-$action such that the fixed point set.
$X^{\C^{*}}$ is finite, so that Theorem \ref{thm:max vol vector field intro}
indeed can be applied to $X.$ Let us briefly recall the reason that
$X^{\C^{*}}$ is finite (as kindly explained to us by Michel Brion).
The starting point is the fundamental fact that any $G-$spherical
variety $X$ can be covered by a finite number of $G-$orbits \cite{l-v}.
Next one shows that if $G$ is a reductive group acting transitively
on a set $Y$ (here an orbit of $G)$ then the fixed point set $Y^{T_{c}}$
is finite for any maximal complex torus $T_{c}$ in $G.$ Indeed,
the Weyl group $N_{G}(T_{c})/T_{c}$ is finite if $G$ is reductive
and its acts transitively on $Y^{T_{c}},$ as follows from the definition
of the normalizer $N_{G}(T_{c})$ (see Proposition 7.2 in \cite{dec-pr}).
Finally, it is a general fact that for a generic (regular) one-parameter
subgroup $\C^{*}$ in $T_{c}$ one has that $Y^{T}=Y^{\C^{*}}$ (as
can be proved by reducing the problem to the linear action of $T$
on a vector space using \cite{su})
\end{rem}
In particular, the previous corollary applies to\emph{ horospherical
}Fano varieties. These are homogenous toric bundles over a rational
homogenous variety \cite{po-s,pas}. Let us for simplicity consider
the case of homogenous fibrations $X$ over a (complete) flag variety:
\[
X\rightarrow G/B
\]
 where any fiber is biholomorphic to a given toric variety $F.$ Then
the corresponding polytope $P$ is contained in the interiour of a
positive Weyl chamber, coinciding with the moment polytope of $F$
under the induced torus action. Moreover, $X$ is Fano with a Kähler-Einstein
metric iff $F$ is Fano and the sum of the positive roots $\sum\alpha$
coincides with the barycenter of the reflexive polytope $P$ wrt the
Duistermaat-Heckman measure $vdp$ (this was first shown in \cite{po-s},
but see also the illuminating discussion in section 4.1 in \cite{do}).
Hence, we arrive at the following
\begin{cor}
\label{cor:horo}Let $G$ be a semi-simple complex Lie group and fix
a maximal torus $T$ in $G$ and a set $R^{+}$ of $n$ positive roots
$\alpha_{i}$ for the Lie algebra of $G.$ Let $P$ be a reflexive
lattice polytope in the positive Weyl chamber of $L(T)^{*}$which
is Delzant and such that $\sum_{\alpha\in R^{+}}\alpha$ is the barycenter
of $P$ wrt the Duistermaat-Heckman measure $vdp.$ Then 
\[
\int_{P}vdp\leq(n+1)^{n}/n!
\]

\end{cor}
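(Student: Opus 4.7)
The plan is to recognize the stated bound as a direct consequence of the preceding corollary (for multiplicity-free actions), applied to the horospherical K\"ahler--Einstein Fano manifold whose symplectic moment data is $(P, v\,dp)$. Thus all the analytic work reduces to Theorem \ref{thm:max vol vector field intro}, and the content of the proof is the construction of $X$ together with two well-known translations between combinatorial and complex-analytic data.

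More concretely, to the Delzant reflexive lattice polytope $P$ in the positive Weyl chamber I would first associate, via the standard toric dictionary, a smooth toric Fano fiber $(F,-K_F)$ with moment polytope $P$. The character-lattice embedding coming from the position of $P$ inside the Weyl chamber endows $F$ with a $B$-action factoring through the abelianisation $B \to B/[B,B] \simeq T_c$, so that I can form the homogeneous horospherical $G$-manifold
\[
X \;:=\; G \times_B F \;\longrightarrow\; G/B,
\]
whose complex dimension is $\dim F + \dim G/B$. A $K$-invariant K\"ahler form $\omega$ on $X$ in the class $c_1(-K_X)$ has multiplicity-free $T$-action with moment image equal to $P$ (sitting inside the positive Weyl chamber) and Duistermaat--Heckman density prescribed by Brion's formula, i.e.\ exactly the function $v$ in the statement.

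The second ingredient is the horospherical analogue of the Wang--Zhou criterion, recorded in the paragraph preceding the statement and due to Podest\`a--Spiro \cite{po-s}: $X$ admits a $K$-invariant K\"ahler--Einstein metric if and only if $F$ is Fano (i.e.\ $P$ is reflexive) and the Duistermaat--Heckman barycenter of $P$ coincides with $\sum_{\alpha \in R^+}\alpha$. Both conditions hold by hypothesis, so such an $\omega$ exists.

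Once $X$ is in hand, the preceding corollary applied to $(X,\omega)$ -- whose $K$-action is multiplicity-free by construction -- yields
\[
\int_P v\,dp \;=\; \int_X \omega^n/n! \;\leq\; (n+1)^n/n!,
\]
which is precisely the desired estimate. The main potential obstacle is bookkeeping of normalizations: one must carefully track the shift of the moment polytope into the positive Weyl chamber, identify the $B$-character producing the homogeneous polarisation with the given lattice polytope $P$, and check that the resulting symplectic form really represents $c_1(-K_X)$ so that the K\"ahler--Einstein form of Podest\`a--Spiro can actually be used as $\omega$. Modulo these standard identifications no new analysis is needed, as all the heavy lifting has already been carried out in Theorem \ref{thm:max vol vector field intro}.
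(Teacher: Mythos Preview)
Your proposal is correct and follows essentially the same route as the paper: the discussion immediately preceding the corollary constructs the horospherical Fano manifold $X=G\times_{B}F\to G/B$ with toric fiber $F$ determined by the Delzant reflexive polytope $P$, invokes the Podest\`a--Spiro criterion \cite{po-s} to obtain a K\"ahler--Einstein metric from the barycenter hypothesis, and then applies the preceding multiplicity-free corollary (hence ultimately Theorem~\ref{thm:max vol vector field intro}). Your caveat about tracking normalizations and the identification of the symplectic class with $c_{1}(-K_{X})$ is exactly the bookkeeping the paper leaves implicit.
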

We expect that the condition that $P$ be Delzant, i.e. the corresponding
toric variety is smooth, can be removed.

Finally, let us mention the connection to\emph{ Okounkov bodies. }As
shown by Okounkov \cite{ok1} one can associate another convex polytope
$\Delta$ to a polarized spherical variety $X,$ such that $\Delta$
fibers over the moment polytope $P$ (the fibers being the Gelfand-Cetlin
string polytopes). The definition is made so that 
\[
c_{1}(L)^{n}/n!=\mbox{Vol}(\Delta)
\]
 More generally, to any polarized projective variety $(X,L)$ there
is convex body $\Delta$ associated (further depending on an auxiliary
choice of flag in $X)$ such that the previous formula for $c_{1}(L)^{n}/n!$
holds \cite{l-m,ka}. In the light of Theorem \ref{thm:max vol vector field intro}
and the toric case it would be interesting to know if the condition
that $X$ be Fano (and $L=-K_{X})$ with a Kähler-Einstein metric
can be naturally expressed in terms of properties of $\Delta?$ See
also \cite{a-k} for the case of reductive spherical varieties.

\end{document}